\DeclareMathOperator*{\argmin}{argmin}
\DeclareMathOperator{\err}{\mathbf{err}}
\newcommand{\Rset}{\mathbb{R}}
\newcommand{\Zset}{\mathbb{Z}}
\newcommand{\EW}[1]{\mathbf{E}\left[#1\right]}
\newcommand{\Prob}[1]{\mathbf{P}\left[#1\right]}
\newcommand{\Var}[1]{\mathbf{Var}\left[#1\right]}
\newcommand{\Xspace}{\mathcal{X}}
\newcommand{\Yspace}{\mathcal{Y}}
\renewcommand{\L}{\mathbf{L}}
\newcommand{\gdag}{g^\dagger}
\newcommand{\gobs}{g^{\rm obs}}
\newcommand{\udag}{u^\dagger}
\newcommand{\ualdel}{u_{\alpha}}
\newcommand{\B}{\mathfrak B}
\newcommand{\calR}{\mathcal{R}}
\newcommand{\g}{\mathfrak g}
\newcommand{\breg}[1]{\mathcal D_{\calR}^{u^*}\left(#1,\udag\right)}
\newcommand{\Cbreg}{C_{\rm bd}}
\newcommand{\Cconc}{C_{\rm conc}}
\newcommand{\T}[2]{\mathcal{T}\left(#2;#1\right)}
\newcommand{\KL}[2]{\mathbb{KL}\left(#2;#1\right)}
\newcommand{\calS}{\mathcal S}
\renewcommand{\S}[2]{\calS\left(#2;#1\right)}
\newcommand{\offset}{\sigma}
\newcommand{\manifold}{\mathbb{M}}
\newcommand{\Cerr}{C_{\rm err}}
\newcommand{\paren}[1]{\left( #1 \right)}
\newcommand{\bracket}[1]{\left[ #1 \right]}
\newcommand{\Zspace}{\mathcal{Y}^{\rm obs}}
\newcommand{\bfj}{\mathbf{j}}
\renewcommand{\limits}{}
\title{Convergence rates in expectation for Tikhonov-type regularization of Inverse Problems with Poisson data}
\author{Frank Werner and Thorsten Hohage\\[0.2cm]\small f.werner@math.uni-goettingen.de, +49 (0)551 39 12468\\\small hohage@math.uni-goettingen.de, +49 (0)551 39 4509\\[0.5cm]
\small Institute for Numerical and Applied Mathematics, University of G\"ottingen\\
\small Lotzestra\ss e 16-18\\
\small 37083 G\"ottingen
}
\date{\today}
\begin{document}
\newtheorem{thm}{Theorem}[section]
\newtheorem{lem}[thm]{Lemma}
\newtheorem{cor}[thm]{Corollary}
\newtheorem{rem}[thm]{Remark}
\newtheorem{ex}[thm]{Example}
\newtheorem{applem}{Lemma}
\renewcommand{\qedsymbol}{\rule{.2cm}{.2cm}}
\newtheorem{ass}{Assumption}

\maketitle

\begin{abstract}
In this paper we study a Tikhonov-type method for ill-posed 
nonlinear operator equations $\gdag = F\left(\udag\right)$ where 
$\gdag$ is an integrable, non-negative function. We assume that
data are drawn from a Poisson process with density $t\gdag$ where
$t>0$ may be interpreted as an exposure time. Such problems occur
in many photonic imaging applications including positron emission tomography,
confocal fluorescence microscopy, astronomic observations, and 
phase retrieval problems in optics. Our approach uses 
a Kullback-Leibler-type data fidelity functional and allows for general 
convex penalty terms. We prove convergence rates of the expectation of
the reconstruction error under a variational source 
condition as $t\to\infty$ both for an a priori and for a 
Lepski{\u\i}-type parameter choice rule. 
\end{abstract}

\section{Introduction}
We consider inverse problems where the ideal data can be interpreted
as a photon density $\gdag\in\L^1(\manifold)$ on some manifold $\manifold$. 
The unknown  will
be described by an element $\udag$ of a subset $\mathfrak{B}$ of a
Banach space $\Xspace$, and $\udag$ and $\gdag$ are related 
by a forward operator $F$ mapping from $\mathfrak{B}$ to $\L^1(\manifold)$:
\begin{equation}\label{eq:opeq} 
F \left(\udag\right) = \gdag\,.
\end{equation}
The data will be drawn from a Poisson process with density $t\gdag$ where
$t>0$ can often be interpreted as an exposure time. Such data can be seen
as a random collection of points on the manifold $\manifold$ on which
measurements are taken (see section \ref{sec:pp} for a precise
definition of Poisson processes). Hence unlike the common deterministic setup
the data do not belong to the same space as the ideal data $\gdag$. 

Such inverse problems occur naturally in photonic imaging since photon
count data are Poisson distributed for fundamental physical reasons. 
Examples include inverse problems in astronomy \cite{BBDV:09},
fluorescence microscopy, in particular 4Pi microscopy \cite{SBH:12},  
coherent X-ray imaging \cite{hw11}, 
and positron emission tomography \cite{CK:02}. 

\smallskip
In this paper we study a penalized likelihood or Tikhonov-type estimator
\begin{equation}\label{eq:ttr}
\ualdel \in\argmin\limits_{u \in \B} \left[\S{F\left(u\right)}{G_t} + \alpha\calR\left(u\right)\right]\,.
\end{equation}
Here $G_t$ describes the observed data, $\mathcal{S}$ is a Kullback-Leibler
type data misfit functional derived in section \ref{sec:pp}, $\alpha>0$
is a regularization parameter, and 
$\mathcal{R}:\Xspace \to \left(-\infty, \infty\right]$ is a convex penalty term,
which may incorporate \textit{a priori} knowledge about the unknown 
solution $\udag$. If $\S{g_2}{g_1} = \|g_1-g_2\|_{\Yspace}^2$ and 
$\calR(u) = \|u-u_0\|_{\Xspace}^2$ with Hilbert space norms $\|\cdot\|_{\Xspace}$ and $\|\cdot\|_{\Yspace}$, 
then \eqref{eq:ttr} is standard Tikhonov regularization. 
In many cases  estimators of the form \eqref{eq:ttr} can
be interpreted as maximum a posteriori (MAP) estimators in a Bayesian 
framework, but our convergence analysis will follow a frequent
paradigm, and in particular $\udag$ will be considered as a deterministic 
quantity. Our data misfit functional $\mathcal S$ will be convex in its second argument, so the minimization problem \eqref{eq:ttr} will be convex
if $F$ is linear.

Recently, considerable progress has been achieved in the deterministic analysis
of variational regularization methods in Banach spaces 
\cite{bh10,bo04,f10,ghs10,g10b,hkps07,s08}. In particular, a number
of papers have been devoted to the Kullback-Leibler divergence as data 
fidelity term in \eqref{eq:ttr}, motivated by the case of Poisson data 
(see \cite{b10,bb11,p08,f10,f11diss,RA:07}), but all of them under deterministic
error assumptions. On the statistical side, inverse problem for Poisson
data have been studied by Antoniadis \& Bigot \cite{ab06} by wavelet Galerkin
methods. Their study is restricted to linear operators with favorable
mapping properties in certain function spaces. Therefore, there
is a need for a statistical convergence analysis for inverse problems
with Poisson data involving more general and in particular nonlinear 
forward operators. This is the aim of the present paper. 

\smallskip 

Our convergence analysis of the estimator \eqref{eq:ttr} is based on  
two basic ingredients:
The first is a a uniform concentration inequality for Poisson data 
(Theorem \ref{thm:pie}), which will be formulated together with some 
basic properties of Poisson processes in Section \ref{sec:pp}. The
proof of Theorem \ref{thm:pie}, which is based on results by 
Reynaud-Bouret \cite{rb03} is given in an appendix. The second
ingredient, presented in Section \ref{sec:det_result}, is a deterministic 
error analysis of \eqref{eq:ttr} for general $\mathcal S$ under a
variational source condition (Theorem \ref{thm:det_conv}). 
Our main results are two estimates of the expected reconstruction error 
as the exposure time $t$ tends to $\infty$: For an a-priori choice
of $\alpha$, which requires knowledge of the smoothness of $\udag$,
such a result in shown in Theorem \ref{thm:cr_random}. 
Finally, a convergence rate result for a completely adaptive method, 
where $\alpha$ is chosen by a Lepski{\u\i}-type balancing principle, is 
presented in Theorem \ref{thm:a_posteriori}. 

\section{Results on Poisson processes}\label{sec:pp}
Let $\manifold\subset \Rset^d$ be a submanifold where measurements are taken,
and let $\{x_1,\dots,x_N\}\subset \manifold$ denote the positions of
the detected photons. Both the total number $N$ of observed photons
and the positions $x_i\in\manifold$ of the photons are random, and 
it is physically evident that the following two properties hold true:
\begin{enumerate}
\item For all measurable subsets $\manifold'\subset\manifold$ the integer
valued random variable $G\left(\manifold'\right):=\#\left\{i~\big|~x_i\in\manifold'\right\}$ 
has expectation $\EW{G\left(\manifold'\right)}=\int_{\manifold'}\gdag\,\mathrm dx$ 
where $\gdag \in L^1(\manifold)$ denotes the underlying photon density. 
\item For any choice of $m$ disjoint measurable subsets $\manifold_1',\dots,\manifold_m'\subset \manifold$ the random variables $G \left(\manifold'_1\right),\dots, G\left(\manifold'_m\right)$ are stochastically independent. 
\end{enumerate}
By definition, this means that $G:=\sum_{i=1}^N\delta_{x_i}$ is a 
Poisson process with intensity $\gdag$. It follows from these properties that 
$G(\manifold')$ for any measurable $\manifold'\subset\manifold$ 
is Poisson distributed with mean $\lambda:= \EW{G(\manifold)}$,
i.e.\ $\Prob{G(\manifold')=n} = \exp(-\lambda)\frac{\lambda^n}{n!}$ 
for all $n\in \{0,1,\dots\}$ 
(see e.g. \cite[Thm 1.11.8]{kmm78}). Moreover, for any measurable $\psi:\Omega\to \Rset$
we have
\begin{align}\label{eq:integralPoisson}
\EW{\int\limits_{\manifold} \psi \,\mathrm dG} = \int\limits_{\manifold} \psi\,\gdag\,\mathrm dx\,,\qquad
\Var{\int\limits_{\manifold} \psi \,\mathrm dG} = \int\limits_{\manifold} \psi^2\,\gdag\,\mathrm dx
\end{align}
whenever the right hand sides are well defined (see \cite{k93}).

Let us introduce for each exposure time $t>0$ a Poisson process $\tilde{G}_t$
with intensity $tg^{\dagger}$ and define $G_t:= \frac{1}{t} \tilde{G}_t$. 
We will study error estimates for approximate solutions to the inverse 
problem \eqref{eq:opeq} with data $G_t$ 
in the limit $t\to \infty$. For this end it will be necessary to derive
estimates on the distribution of the log-likelihood functional
\begin{equation}\label{eq:nllfunct_poisson}
\S{g}{G_t} := 
\int\limits_{\manifold} g \,\mathrm dx 
- \int\limits_{\manifold} \ln g\,\mathrm dG_t
= \int\limits_{\manifold} g \,\mathrm dx - \frac{1}{t} \sum_{i=1}^N \ln g(x_i)\,,
\end{equation}
which is defined for functions $g$ fulfilling $g \geq 0$ a.e. We set $\ln 0 :=-\infty$, so $\S{g}{G_t}=\infty$ if  $g(x_i)=0$ for some 
$i=1,\dots,N$. 
Using \eqref{eq:integralPoisson} we obtain
\begin{align}
\EW{\S{g}{G_t}} = \int\limits_{\manifold} \left[g - \gdag\ln\left(g\right)\right]\,\mathrm dx
\quad \mbox{and}\quad
\Var{\S{g}{G_t}} = \frac{1}{t}\int\limits_{\manifold} \ln\left(g\right)^2\,\gdag\,\mathrm dx
\end{align}
if the integrals exist. Moreover, we have
\[
\EW{\S{g}{G_t}} - \EW{\S{\gdag}{G_t}}
= \int\limits_{\manifold} \bracket{g -\gdag- \gdag \ln \left(\frac{g}{\gdag}\right)}\,\mathrm d x,
\]
and the right hand side (if well-defined) is known as 
\emph{Kullback-Leibler divergence}
\begin{equation}\label{eq:kl}
\KL{g}{\gdag}:=
\int\limits_{\{\gdag>0\}} \left[g-\gdag-\gdag\ln\frac{g}{\gdag}\right]\,\mathrm dx.
\end{equation}
$\KL{g}{\gdag}$ can be seen as the ideal data misfit functional if the
exact data $\gdag$ were known. Since only $G_t$ is given, 
we approximate $\KL{g}{\gdag}$ by $\S{g}{G_t}$ up to the additive 
constant $\EW{\S{\gdag}{G_t}}$, which is independent of $g$. 
The error between the estimated and the
ideal data misfit functional is given by 
\begin{equation}\label{eq:err_Poisson}
\left|\S{g}{G_t}- \EW{\S{\gdag}{G_t}}
-\KL{g}{\gdag}\right| 
= \left|\int\limits_{\manifold} \ln(g) \left(\mathrm dG_t-g^{\dagger} \,\mathrm dx\right)\right|.
\end{equation}

Based on results by  Reynaud-Bouret \cite{rb03}, which can be seen
as an analogue to Talagrand's concentration inequalities for empirical
processes, we will derive the following concentration inequality for
such error terms in the appendix:

\begin{thm}\label{thm:pie}
Let $\manifold \subset \mathbb R^d$ be a bounded domain with Lipschitz boundary $\partial D$, $R \geq 1$ and $s > \frac{d}{2}$. Consider the ball
\[
B_s \left(R\right) := \left\{ \g \in H^s \left(\manifold\right) ~\big|~ \left\Vert \g\right\Vert_{H^s \left(\manifold\right)} \leq R\right\}.
\]
Then there exists a constant $\Cconc \geq 1$ depending only on $\manifold$, 
$s$ and $\|\gdag\|_{\L^1(\manifold)}$ such that
\[
\Prob{\sup\limits_{\g \in B_s \left(R\right)} \left|\int\limits_{\manifold} \g \left( \,\mathrm d G_t - \gdag\,\mathrm d x\right) \right| \leq \frac{\rho}{\sqrt{t}}} \geq 1 - \exp\left(-\frac{\rho}{R \Cconc} \right)
\]
for all $t \geq 1$ and $\rho \geq R \Cconc$. 
\end{thm}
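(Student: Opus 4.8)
The plan is to reduce the supremum over the infinite-dimensional ball $B_s(R)$ to a controllable quantity by invoking Reynaud-Bouret's concentration inequality for Poisson processes from \cite{rb03}, which bounds the deviation of $\sup_{f\in\mathcal F}\int f\,(\mathrm dG_t-\gdag\,\mathrm dx)$ above its expectation in terms of a wave-variance term and a sup-norm term over the class $\mathcal F$. First I would record the relevant a priori bounds that make $B_s(R)$ a tractable class: since $s>d/2$ and $\manifold$ is a bounded Lipschitz domain, the Sobolev embedding $H^s(\manifold)\hookrightarrow C(\overline{\manifold})$ gives $\|\g\|_{\infty}\le C_{\rm emb}R$ for all $\g\in B_s(R)$, so the sup-norm parameter in Reynaud-Bouret's bound is $O(R)$; likewise the variance-type quantity $\sup_{\g}\int_{\manifold}\g^2\,\gdag\,\mathrm dx\le C_{\rm emb}^2R^2\|\gdag\|_{\L^1}$ is $O(R^2)$, with the constant absorbing the dependence on $\|\gdag\|_{\L^1(\manifold)}$ as permitted in the statement.

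Next I would control the expectation $\EW{\sup_{\g\in B_s(R)}\left|\int_{\manifold}\g\,(\mathrm dG_t-\gdag\,\mathrm dx)\right|}$. The natural route is a chaining / metric-entropy argument: the entropy numbers of $B_s(R)$ in $C(\overline{\manifold})$ (or in $\L^2$) are known to decay polynomially, $\log N(\varepsilon,B_s(R),\|\cdot\|_{\infty})\lesssim (R/\varepsilon)^{d/s}$, and since $d/s<2$ this entropy integral converges; combined with \eqref{eq:integralPoisson}, which identifies the variance of $\int\g\,\mathrm dG_t$ as $\frac1t\int\g^2\gdag\,\mathrm dx$, a Dudley-type bound yields $\EW{\sup\cdots}\le C R/\sqrt t$ for some constant $C$ depending only on $\manifold$, $s$, $\|\gdag\|_{\L^1}$. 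Alternatively, one can linearly scale to the unit ball $B_s(1)$, treat $R\cdot\int_{\manifold}\tilde\g\,(\mathrm dG_t-\gdag\,\mathrm dx)$ with $\|\tilde\g\|_{H^s}\le1$, and note that the class $B_s(1)$ has finite expectation $C_0/\sqrt t$, giving the $R$-linearity of the mean cleanly.

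With the mean bounded by $CR/\sqrt t$ and the variance and sup-norm parameters bounded by $O(R^2)$ and $O(R)$ respectively, I would feed these into Reynaud-Bouret's inequality: it gives, for every $x>0$,
\[
\Prob{\sup_{\g\in B_s(R)}\left|\int_{\manifold}\g\,(\mathrm dG_t-\gdag\,\mathrm dx)\right| \ge \frac{CR}{\sqrt t} + c_1\sqrt{\frac{R^2 x}{t}} + c_2\frac{Rx}{t}} \le e^{-x}.
\]
Setting $x=\rho/(R\Cconc)$ and choosing $\Cconc$ large enough (depending only on $\manifold$, $s$, $\|\gdag\|_{\L^1}$, and the absolute constants $C,c_1,c_2$) so that for all $\rho\ge R\Cconc$ and $t\ge1$ the three terms on the left sum to at most $\rho/\sqrt t$ — here one uses $\sqrt{R^2x/t}=\sqrt{\rho R/(\Cconc t)}\le \rho/(\sqrt{\Cconc}\sqrt t)$ when $\rho\ge R$, and $Rx/t=\rho/(\Cconc t)\le \rho/(\Cconc\sqrt t)$ when $t\ge1$, while $CR/\sqrt t\le \rho/(\Cconc\sqrt t)\cdot C\Cconc/\rho\cdot R\le (C/\Cconc)\rho/\sqrt t$ using $\rho\ge R\Cconc$ — yields exactly the claimed bound $1-\exp(-\rho/(R\Cconc))$.

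The main obstacle is making the passage to Reynaud-Bouret's inequality rigorous: her results are stated for a fixed countable class of functions with explicit control of $\sup_f\|f\|_\infty$ and $\sup_f\mathrm{Var}(\int f\,\mathrm dG)$, so one must verify the supremum over the (non-separable-looking but in fact $\|\cdot\|_\infty$-separable) ball $B_s(R)$ is measurable and reduces to a countable supremum, and one must obtain the bound on the expectation of the supremum with the correct linear dependence on $R$ and $1/\sqrt t$ — this entropy-integral estimate, and tracking that every implied constant depends only on the permitted quantities $\manifold$, $s$, $\|\gdag\|_{\L^1(\manifold)}$, is where the real work lies; the final bookkeeping to extract the clean form with a single constant $\Cconc$ is then routine.
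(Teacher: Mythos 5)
Your overall architecture coincides with the paper's: apply Reynaud--Bouret's concentration inequality (the paper's Lemma \ref{lem:concie}) to a countable dense subclass of $B_s(R)$, control the sup-norm parameter by the Sobolev embedding $H^s(\manifold)\hookrightarrow \L^\infty(\manifold)$ (valid since $s>d/2$), bound the variance term by $tR^2\left\Vert E_\infty\right\Vert^2\|\gdag\|_{\L^1(\manifold)}$, and then absorb the three resulting terms into $\rho/\sqrt{t}$ using $t\geq 1$ and $\rho\geq R\Cconc$. The one place where you genuinely diverge is the bound $\EW{\sup_{\g\in B_s(R)}|\int_\manifold \g\,(\mathrm dG_t-\gdag\,\mathrm dx)|}\leq CR/\sqrt{t}$. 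The paper obtains this without any chaining: it extends $\g$ to a periodic Sobolev space on a cube, expands in the trigonometric orthonormal basis $\{\phi_\bfj\}$ so that $B_s(R)$ becomes an ellipsoid with weights $\gamma_\bfj=(1+|\bfj|^2)^{-s/2}$, and then a single Cauchy--Schwarz in $\bfj$ followed by Jensen and the variance identity \eqref{eq:integralPoisson} gives the bound, with $s>d/2$ entering only through the convergence of $\sum_\bfj(1+|\bfj|^2)^{-s}$. This is shorter, fully elementary, and exploits the Hilbertian structure of $H^s$; your entropy-integral route is more generic and would extend to non-Hilbertian smoothness classes with integrable Dudley integral.

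One caveat on your route that you should not gloss over: the process $\g\mapsto\int_\manifold\g\,(\mathrm dG_t-\gdag\,\mathrm dx)$ is not sub-Gaussian, so knowing the pointwise variance from \eqref{eq:integralPoisson} together with the covering numbers of $B_s(R)$ does not by itself license ``a Dudley-type bound.'' The increments of a Poisson integral are of Bernstein type, and a direct mixed-tail chaining produces, besides the $\int_0^D\sqrt{\log N(\varepsilon)}\,\mathrm d\varepsilon$ term, a term of order $t^{-1}\int_0^D\log N(\varepsilon)\,\mathrm d\varepsilon$, whose integrand behaves like $(R/\varepsilon)^{d/s}$ and therefore diverges precisely in the regime $d/2<s\leq d$ that the theorem must cover. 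The standard repair is to symmetrize first (condition on the number of points, pass to a Rademacher process over the sampled points, and apply Dudley conditionally with respect to the empirical $\L^2$ metric), which restores the $\sqrt{\log N}$ integrand and the linear dependence on $R$; this step is the real content of your expectation bound and must be supplied explicitly. Your second suggested alternative --- rescaling to $B_s(1)$ and ``noting'' that its supremum has expectation $C_0/\sqrt t$ --- only delivers the $R$-linearity and leaves the substantive $1/\sqrt t$ estimate unproven, so it cannot stand on its own. The final bookkeeping you describe for choosing $\Cconc$ matches the paper's.
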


To apply this concentration inequality to the right hand side of
\eqref{eq:err_Poisson}, we would need that $\ln(F(u))\in B_s(R)$
for all $u\in\mathfrak{B}$. However, since 
$\sup_{\mathfrak{g}\in B_s(R)}\|\mathfrak{g}\|_{\infty}<\infty$ 
by Sobolev's embedding theorem,  zeros of $F(u)$
for some $u\in \mathfrak{B}$ would not be admissible, which is a quite
restrictive assumption. Therefore, we use a shifted version of the
data fidelity term with an offset parameter $\sigma>0$:

\begin{align}
\label{eq:defi_Se}
\S{g}{G_t} &:= \int\limits_{\manifold} g \,\mathrm d x -  \int\limits_{\manifold} \ln \left(g+\offset\right) \,(\mathrm d G_t +\offset \mathrm dx) \\[0.1cm]
\T{g}{\gdag} &:= \KL{g+\sigma}{\gdag+\sigma} \label{eq:defi_Te}
\end{align}
Then the error is given by
\begin{equation}\label{eq:def_err}
Z\left(g\right) :=\left|\S{g}{G_t} 
- \EW {\S{\gdag}{G_t}} 
 - \T{g}{\gdag}\right| = \left|\int\limits_{\manifold} \ln \left(g+\offset\right) \left(\mathrm d G_t - \gdag \,\mathrm d x\right)\right|\,.
\end{equation}
We will show in Section \ref{sec:cr_poisson} that Theorem \ref{thm:pie} can
be used to estimate the concentration of $\sup_{u\in\mathfrak{B}} Z(F(u))$
under certain assumptions. 

\section{A deterministic convergence rate result}\label{sec:det_result}

In this section we will perform a convergence analysis for the method \eqref{eq:ttr} with general $\mathcal S$ under a deterministic noise assumption. 
Similar results have been obtained by Flemming \cite{f10,f11diss}, 
Grasmair \cite{g10b}, and Bot \& Hofmann \cite{bh10} under different
assumptions on $\mathcal{S}$. 

As in Section \ref{sec:pp} we will consider the ''distance'' 
between the estimated and the ideal data misfit functional as noise level: 
\begin{ass}\label{ass:SR}
Let $u^\dagger \in \mathfrak B \subset \Xspace$ be the exact solution and denote by $g^\dagger := F\left(u^\dagger\right) \in \Yspace$ the exact data. Let $\Zspace$ be a set containing all possible observations and $\gobs\in \Zspace$ 
the observed data. Assume that:
\begin{enumerate}
\item The exact data fidelity functional 
$\mathcal{T}: F\left(\mathfrak{B}\right) \times \Yspace \to [0,\infty]$ 
is non-negative, and $\mathcal{T}(\gdag,\gdag)=0$. 
\item For the approximate data fidelity term 
$\mathcal{S}:  F\left(\mathfrak{B}\right) \times \Yspace \to [0,\infty]$ 
there exist constants 
$\err\geq 0$ and $\Cerr\geq 1$ such that
\begin{equation}\label{eq:err}
\S{g}{\gobs} -\S{\gdag}{\gobs}  \geq \frac{1}{\Cerr} \T{g}{g^\dagger}- \err
\end{equation}
for all $g \in F\left(\B\right)$.
\end{enumerate}
\end{ass}
\begin{ex}
\begin{itemize}
\item \emph{Classical deterministic noise model:} If $\S{\hat g}{g} = \T{\hat g}{g}  = \left\Vert g-\hat g\right\Vert_{\Yspace}^r$, then we obtain from $\left|a-b\right|^r \geq 2^{1-r} a^r - b^r$ that \eqref{eq:err} holds true with $\Cerr = 2^{r-1}$ and $\err = 2\left\Vert \gdag - \gobs\right\Vert_{\Yspace}^r$. Thus Assumption \ref{ass:SR} covers the classical deterministic noise model.
\item \emph{Poisson data:} For the case of $\mathcal S$ and $\mathcal T$ as in \eqref{eq:defi_Se} and \eqref{eq:defi_Te} 
it can be seen from elementary calculations that \eqref{eq:err} requires $\Cerr = 1$ and
\begin{equation}\label{eq:cond_err_poisson}
\err \geq -\int\limits_{\manifold} \ln\left(\gdag + \offset\right) 
\left(\,\mathrm d G_t - \gdag \,\mathrm d x \right) 
+ \int\limits_{\manifold} \ln\left(F\left(u\right) + \offset\right) \left(\,\mathrm d G_t - \gdag \,\mathrm d x\right)
\end{equation}
for all $u \in \B$. Consequently \eqref{eq:err} holds true with $\Cerr = 1$ if $\err/2$ is an upper bound for the integrals in \eqref{eq:def_err} with $g = F\left(u\right), u \in \B$. We will show that Theorem \ref{thm:pie} ensures
that this holds true for $\err/2=\frac{\rho}{\sqrt{t}}$ with probability $\geq 1-\exp\left(-c\rho\right)$ for some constant $c > 0$ (cf. Corollary \ref{cor:apply_concie_ttr}).
\end{itemize}
\end{ex}
In a previous study of Newton-type methods for inverse problems with 
Poisson data \cite{hw11} the authors had to use a slightly stronger
assumption on the noise level involving a second inequality. 
 \cite[Assumption 2]{hw11} implies \eqref{eq:err} with $\err = \left(1+\Cerr\right) \sup_{u \in \B} \err\left(g\right)$ provided this value 
 is finite. On the other hand, \eqref{eq:err} allows that 
 $\S{g}{\gobs} = \infty$ even if $\T{g}{g^\dagger} < \infty$, which is impossible in \cite[Assumption 2]{hw11} if $\err\left(g\right) < \infty$. 

\smallskip

To measure the smoothness of the unknown solution, we will use a source
condition in the form of a variational inequality, which was introduced by 
Hofmann et al \cite{hkps07} for the case of a H\"older-type source condition with index $\nu = \frac12$ and generalized in many recent publications \cite{bh10,hy10,g10b,f10,fhm11}. For their formulation we need the 
\emph{Bregman distance}. 
For a subgradient $u^* \in \partial \calR \left(\udag\right)\subset\Xspace^*$ (e.g. $u^*=\udag-u_0$ if $\calR\left(u\right) = 1/2\left\Vert u-u_0\right\Vert_{\Xspace}^2$ with a Hilbert norm $\left\Vert\cdot\right\Vert_{\Xspace}$) the Bregman distance of $\calR$ between $u$ and $\udag$ w.r.t.\ $u^*$ is given by 
\[
\breg{u} := \calR \left(u\right) - \calR \left(\udag\right) - \left<u^*, u-\udag\right>.
\]
In the aforementioned example of $\calR\left(u\right) = 1/2\left\Vert u-u_0\right\Vert_{\Xspace}^2$ for a Hilbert space norm $\left\Vert \cdot\right\Vert_{\Xspace}$ we have $\breg{u}
= 1/2\left\Vert u-\udag\right\Vert_{\Xspace}^2$. In this sense, the Bregman distance is a natural generalization of the norm. We will use the Bregman distance also to measure the error of our approximate solutions. 

Now we are able to formulate our assumption on the smoothness of $\udag$:
\begin{ass}[variational source condition]\label{ass:additive_sc}
$\calR :\Xspace \to \left(-\infty,\infty\right]$ is a proper convex functional and there exist $u^*\in\partial \calR\left(\udag\right)$, a parameter $\beta>0$ and an index function $\varphi$ (i.e. $\varphi$ monotonically increasing, $\varphi \left(0\right) = 0$) such that $\varphi^2$ is concave and 
\begin{equation}\label{eq:vieadd}
\beta \breg{u} \leq \calR\left(u\right)-\calR\left(\udag\right) + \varphi\left(\T{F\left(u\right)}{\gdag}\right)\qquad \mbox{for all } u\in\B.
\end{equation}
\end{ass}

\begin{ex}
Let $\psi$ be an index function, $\psi^2$ concave and $F:\B \subset \Xspace \to \Yspace$ Fr\'echet differentiable between Hilbert spaces $\Xspace$ and $\Yspace$ with Fr\'echet derivative $F' \left[\cdot\right]$. 
Flemming \cite{fhm11,f11diss} has shown that
\begin{equation}\label{eq:spectral_sc}
\udag - u_0 = \psi \left(F' \left[\udag\right]^*F' \left[\udag\right]\right) \omega
\end{equation}
together with the tangential cone condition $\left\Vert F' \left[\udag\right] \left(u-\udag\right)\right\Vert_{\Yspace} \leq \eta \left\Vert F\left(u\right) - F\left(v\right)\right\Vert_{\Yspace}$ implies the variational inequality
\begin{equation}\label{eq:vie_spectral}
\beta \left\Vert u-\udag\right\Vert_{\Xspace}^2 \leq \left\Vert u \right\Vert_{\Xspace}^2 - \left\Vert \udag\right\Vert_{\Xspace}^2 + \varphi_\psi \left(\left\Vert F \left(u\right) - \gdag\right\Vert_{\Yspace}^2 \right).
\end{equation}
for all $u \in \B$. Here $\varphi_\psi$ is another index function depending on $\psi$, and for the most important cases of H\"older-type and logarithmic source conditions the implications 
\begin{subequations}\label{eqs:special_ind_funct}
\begin{align}
\psi \left(\tau\right) = \tau^\nu & \qquad\Rightarrow\qquad\varphi_\psi\left(\tau\right) = \tilde \beta \tau^{\frac{2\nu}{2\nu+1}}, \\[0.1cm]
\psi\left(\tau\right) = -\left(\ln \left(\tau\right)\right)^{-p} &\qquad\Rightarrow\qquad\varphi_\psi\left(\tau\right) = \bar \beta \left(-\ln \left(\tau\right)\right)^{-2p}
\end{align}
\end{subequations}
hold true  with some constants $\tilde \beta, \bar \beta$ where $p > 0$ and $\nu \in \left(0,\frac12\right]$ (see Hofmann \& Yamamoto \cite[Prop. 6.6]{hy10} and Flemming \cite[Sec. 13.5.2]{f11diss} respectively).
\end{ex}

With the notation \eqref{eq:def_err} of the error, we are able to perform a deterministic convergence analysis including an error decomposition. 
Following Grasmair \cite{g10b} we use the Fenchel conjugate of $\phi$ 
to bound the approximation error. Recall that the Fenchel conjugate 
$\phi^*$ of a function $\phi: \mathbb R \to \left(-\infty,\infty\right]$ 
is given by
\[
\phi^* \left(s\right) = \sup\limits_{\tau \in \mathbb R}\left(s\tau - \phi \left(\tau\right)\right).
\]
$\phi^*$ is always convex as supremum over the affine-linear (and hence convex) 
functions $s \mapsto s\tau - \phi \left(\tau\right)$. 
Setting $\varphi(\tau):=-\infty$ for $\tau<0$ we obtain
\begin{equation}\label{eq:defi_Fenchel}
\left(-\varphi\right)^* \left(s\right) = \sup\limits_{\tau \geq 0}\left(s\tau + \varphi \left(\tau\right)\right).
\end{equation}
This allows us to apply  tools from convex analysis: 
For convex and continuous $\phi$ Young's inequality holds true (see e.g. \cite[eq. (4.1) and Prop. 5.1]{et76}), which states
\begin{equation}\label{eq:young}
\begin{aligned}
s\tau &\leq \phi\left(\tau\right) + \phi^* \left(s\right)\qquad \text{for all} \qquad s,\tau \in \mathbb R,\\[0.1cm]
s\tau &= \phi\left(\tau\right) + \phi^* \left(s\right)\qquad \Leftrightarrow \qquad \tau \in \partial \phi\left(s\right).
\end{aligned}
\end{equation}
Moreover for convex and continuous $\phi$ we have $\phi^{**} = \phi$ (see e.g. \cite[Prop. 4.1]{et76}). 

\smallskip

Now we are in a position to prove our deterministic convergence rates result:
\begin{thm}\label{thm:det_conv}
Suppose Assumptions \ref{ass:SR} and \ref{ass:additive_sc} hold true and the Tikhonov functional has a global minimizer. Then we have the following assertions:
\begin{enumerate}
\item \label{it:det_conv1} For all $\alpha >0$ and $\err \geq 0$ we have 
\begin{equation}\label{eq:error_decomp}
\beta\breg{\ualdel} \leq \frac{\err}{\alpha} + \left(-\varphi\right)^*\left(-\frac{1}{\Cerr\alpha}\right).
\end{equation}
\item \label{it:det_conv2}
Let $\err>0$. Then the infimum of the right hand side of \eqref{eq:error_decomp}
it attained at $\alpha=\overline{\alpha}$ if and only if 
 $\frac{-1}{\Cerr\overline{\alpha}} \in \partial(-\varphi)(\Cerr\err)$, and
 we have
\begin{equation}\label{eq:rate_apriori}
\beta \breg{u_{\overline{\alpha}}} \leq \sqrt{\Cerr}\varphi\left(\err\right).
\end{equation}
\end{enumerate}
\end{thm}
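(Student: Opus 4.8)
The plan is to start from the minimizing property of $\ualdel$ in \eqref{eq:ttr} and play it against the variational source condition \eqref{eq:vieadd}, exactly in the spirit of Grasmair's argument. First I would write down the inequality
\[
\S{F(\ualdel)}{\gobs} + \alpha\calR(\ualdel) \leq \S{\gdag}{\gobs} + \alpha\calR(\udag),
\]
which after rearranging gives $\alpha\bigl(\calR(\ualdel)-\calR(\udag)\bigr) \leq \S{\gdag}{\gobs}-\S{F(\ualdel)}{\gobs}$. Combining this with the noise assumption \eqref{eq:err} applied to $g=F(\ualdel)$, namely $\S{F(\ualdel)}{\gobs}-\S{\gdag}{\gobs}\geq \frac{1}{\Cerr}\T{F(\ualdel)}{\gdag}-\err$, yields
\[
\alpha\bigl(\calR(\ualdel)-\calR(\udag)\bigr) \leq \err - \tfrac{1}{\Cerr}\T{F(\ualdel)}{\gdag}.
\]
Next I would insert this into the variational source condition evaluated at $u=\ualdel$: from \eqref{eq:vieadd} we get $\beta\breg{\ualdel}\leq \calR(\ualdel)-\calR(\udag)+\varphi\bigl(\T{F(\ualdel)}{\gdag}\bigr)$, so multiplying by $\alpha$ and substituting the bound above gives
\[
\alpha\beta\breg{\ualdel} \leq \err - \tfrac{1}{\Cerr}\T{F(\ualdel)}{\gdag} + \alpha\,\varphi\bigl(\T{F(\ualdel)}{\gdag}\bigr).
\]
Writing $\tau := \T{F(\ualdel)}{\gdag}\geq 0$, the last two terms are of the form $-\tfrac{1}{\Cerr}\tau + \alpha\varphi(\tau) = \alpha\bigl(-\tfrac{1}{\Cerr\alpha}\tau+\varphi(\tau)\bigr)\leq \alpha\,(-\varphi)^*\!\left(-\tfrac{1}{\Cerr\alpha}\right)$ by the definition \eqref{eq:defi_Fenchel} of the Fenchel conjugate of $-\varphi$. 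Dividing through by $\alpha$ gives \eqref{eq:error_decomp}. A technical point to be careful about here is that \eqref{eq:vieadd} only helps when $\calR(\ualdel)$ is finite and $u^*$ is a genuine subgradient; since the Tikhonov functional has a minimizer with finite value and $\calR(\udag)<\infty$, this is fine, and the case $\err=0$ requires no special treatment because the Fenchel term is still well-defined (possibly $+\infty$).

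For part \eqref{it:det_conv2} I would optimize the right-hand side $h(\alpha):=\frac{\err}{\alpha}+(-\varphi)^*\!\left(-\frac{1}{\Cerr\alpha}\right)$ over $\alpha>0$. Substituting $s:=-\frac{1}{\Cerr\alpha}$, so $\alpha = -\frac{1}{\Cerr s}$ with $s<0$, turns $h$ into $-\Cerr\err\, s + (-\varphi)^*(s)$, whose infimum over $s<0$ is (minus) the value $-\bigl(-\varphi\bigr)^{**}(\Cerr\err) = \varphi(\Cerr\err)$ by biconjugation (using that $\varphi$, hence $-\varphi$ after the $-\infty$ extension, is concave with $\varphi^2$ concave, so the relevant convexity/lower-semicontinuity holds and $(-\varphi)^{**}=-\varphi$). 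The infimum is attained exactly when $\Cerr\err\in\partial(-\varphi)^*(s)$, equivalently $s\in\partial(-\varphi)(\Cerr\err)$ by \eqref{eq:young}, i.e. $-\frac{1}{\Cerr\overline{\alpha}}\in\partial(-\varphi)(\Cerr\err)$; at this $\overline{\alpha}$ the Young equality in \eqref{eq:young} gives $h(\overline{\alpha})=\varphi(\Cerr\err)$.

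It remains to pass from $\varphi(\Cerr\err)$ to the cleaner bound $\sqrt{\Cerr}\,\varphi(\err)$ in \eqref{eq:rate_apriori}. This is where the concavity of $\varphi^2$ enters: since $\varphi^2$ is concave with $\varphi^2(0)=0$, it is subadditive and in particular $\varphi^2(c\tau)\leq c\,\varphi^2(\tau)$ for $c\geq 1$, hence $\varphi(\Cerr\err)\leq\sqrt{\Cerr}\,\varphi(\err)$. Combining with $\beta\breg{u_{\overline{\alpha}}}\leq h(\overline{\alpha})=\varphi(\Cerr\err)$ gives \eqref{eq:rate_apriori}. I expect the main obstacle to be bookkeeping rather than depth: making sure the Fenchel/biconjugation steps are valid under the stated hypotheses (concavity of $\varphi$, concavity of $\varphi^2$, the $-\infty$ extension to negatives), and correctly tracking the sign conventions in passing between $(-\varphi)^*$ evaluated at negative arguments and the subdifferential characterization of the optimal $\overline{\alpha}$.
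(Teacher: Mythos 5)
Your proposal is correct and follows essentially the same route as the paper's own proof: part (1) combines the minimality of $\ualdel$, Assumption \ref{ass:SR}, and the variational inequality \eqref{eq:vieadd}, then absorbs $-\frac{1}{\Cerr\alpha}\tau+\varphi(\tau)$ into $(-\varphi)^*\left(-\frac{1}{\Cerr\alpha}\right)$; part (2) uses the substitution $s=-\frac{1}{\Cerr\alpha}$, biconjugation, the Young equality condition, and the concavity of $\varphi^2$ to get $\varphi(\Cerr\err)\leq\sqrt{\Cerr}\,\varphi(\err)$. No substantive differences.
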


\begin{proof}
(\ref{it:det_conv1}): By the definition of $\ualdel$ we have
\begin{equation}\label{eq:aux_conv}
\S{F(\ualdel)}{\gobs} + \alpha \calR(\ualdel)
\leq \S{\gdag}{\gobs} + \alpha \calR(\udag).
\end{equation}
It follows that 
\begin{align*}
\beta \breg{\ualdel} \stackrel{\mbox{\scriptsize{Ass.\ \ref{ass:additive_sc}}}}{\quad\leq\quad}&\calR\left(\ualdel\right) - \calR\left(\udag\right) + \varphi\left(\T{F\left(\ualdel\right)}{\gdag}\right)\\[0.1cm]
\stackrel{\eqref{eq:aux_conv}}{\quad\leq\quad}& \frac{1}{\alpha} \left(\S{\gdag}{\gobs}-\S{F\left(\ualdel\right)}{\gobs} \right) + \varphi\left(\T{F(\ualdel)}{\gdag}\right)\\[0.1cm]
\stackrel{\mbox{\scriptsize{Ass.\ \ref{ass:SR}}}}{\quad\leq\quad}& \frac{\err}{\alpha} - \frac{1}{\Cerr\alpha}\T{F(\ualdel)}{\gdag} + \varphi\left(\T{F\left(\ualdel\right)}{\gdag}\right) \\[0.1cm]
\quad\leq\quad & \frac{\err}{\alpha}  + \sup\limits_{\tau\geq 0} \left[ \frac{\tau}{-\Cerr \alpha} - \left(-\varphi\right)\left(\tau\right)\right] \\[0.1cm]
\stackrel{\eqref{eq:defi_Fenchel}}{\quad=\quad}& \frac{\err}{\alpha} + (-\varphi)^*\left(-\frac{1}{\Cerr\alpha}\right).
\end{align*}
(\ref{it:det_conv2}): Using the fact that $\left(-\varphi\right)^{**}=-\varphi$ we obtain
\begin{align*}
\inf\limits_{\alpha> 0}\left[ \frac{\err}{\alpha} + \left(-\varphi\right)^*\left(-\frac{1}{\Cerr\alpha}\right)\right] = & -\sup\limits_{s< 0} \left[s\Cerr\err - \left(-\varphi\right)^*\left(s\right)\right] \\[0.1cm]
= & - \left(-\varphi\right)^{**}\left(\Cerr\err\right) 
=\varphi(\Cerr\err) \leq \sqrt{\Cerr}\varphi\left(\err\right)
\end{align*}
where we used the concavity of $\varphi^2$. By the conditions for equality in Young's inequality \eqref{eq:young}, the supremum is attained at 
$\alpha=\overline{\alpha}$ if and only if $\frac{-1}{\Cerr\overline{\alpha}} \in \partial\left(-\varphi\right)\left(\Cerr\err\right)$.
\end{proof}

\begin{rem}
Since $\varphi$ is assumed to be finite,  we have $\partial(- \varphi) \left(s\right)\neq \emptyset$ for all $s>0$ (see e.g. \cite[Cor. 2.3 and Prop. 5.2]{et76}), i.e.\ the parameter choice \eqref{eq:stop_a_priori} is feasible. 
If $\varphi$ is differentiable, then 
$\partial \left(-\varphi\right) \left(s\right) = \left\{ -\varphi' \left(s\right)\right\}$ and \eqref{eq:stop_a_priori} is equivalent to $\alpha = 1/(\Cerr \varphi'\left(\Cerr\err\right))$.
\end{rem}

\begin{ex}[Classical case]
Let $F = T : \Xspace \to \Yspace$ be a bounded linear operator between Hilbert spaces $\Xspace$ and $\Yspace$. For $\S{g_1}{g_2} = \T{g_1}{g_2} = \left\Vert g_1 - g_2\right\Vert_{\Yspace}^2$ and $\calR\left(u\right) = \left\Vert u-u_0\right\Vert_{\Xspace}^2$ we have $\breg{u} =\left\Vert u-\udag\right\Vert_{\Xspace}^2$, $\Cerr =2$ and $\err = 2\left\Vert \gdag-\gobs\right\Vert_{\Yspace}^2$. Moreover \eqref{eq:spectral_sc} implies \eqref{eq:vieadd} with $\varphi = \varphi_\psi$.

If $\left\Vert \gdag-\gobs\right\Vert_{\Yspace} \leq \delta$ as mentioned in the introduction, then we obtain for an appropriate parameter choice $\left\Vert \ualdel-\udag\right\Vert_{\Xspace} = \mathcal O\left( \sqrt{ \varphi_\psi \left(\delta^2\right) }\right)$. For the special examples of $\psi$ given in \eqref{eqs:special_ind_funct} we obtain
\[
\left\Vert \ualdel-\udag\right\Vert_{\Xspace} = \mathcal O \left(\delta^{\frac{2\nu}{2\nu+1}} \right), \qquad \left\Vert \ualdel-\udag\right\Vert_{\Xspace} = \mathcal O \left(\left(-\ln\left(\delta\right)\right)^{-p}\right)
\]
respectively, and these convergence rates are known to be of optimal order.
\end{ex}

\section{Convergence rates for Poisson data with a-priori parameter choice rule}\label{sec:cr_poisson}

In this section we will combine Theorems \ref{thm:pie} 
and \ref{thm:det_conv} to obtain convergence rates for the method 
\eqref{eq:ttr} with Poisson data.  We need the following properties 
of the operator $F$:
\begin{ass}[Assumptions on the forward operator]\label{ass:F}
Let $\Xspace$ be a Banach space and $\B \subset \Xspace$ a bounded, closed and convex subset containing the exact solution $\udag$ to \eqref{eq:opeq}. Let $\manifold \subset \mathbb R^d$ a bounded domain with Lipschitz boundary $\partial D$. Assume moreover that the operator $F : \B \to \Yspace :=\L^1 \left(\manifold\right)$ has the following properties:
\begin{enumerate}
\item \label{it:F_pos} $F \left(u\right) \geq 0$ a.e.\ for all $u \in \B$.
\item \label{it:F_Hs} There exists a Sobolev index $s > \frac{d}{2}$ such that 
$F(\mathfrak{B})$ is a bounded subset of $H^s \left(\manifold\right)$.
\end{enumerate}
\end{ass}
Property (\ref{it:F_pos}) is natural since photon densities 
(or more generally intensities of Poisson processes) have to be non-negative. Property (\ref{it:F_Hs}) is not restrictive for inverse problems  
since it corresponds 
to a smoothing property of $F$ which is usually responsible for the 
ill-posedness of the underlying problem. 

\begin{rem}[Discussion of Assumption \ref{ass:additive_sc}]\label{rem:vie_poisson}
Let Assumption \ref{ass:F} and \eqref{eq:spectral_sc} hold true. Since we have the lower bound
\begin{equation}\label{eq:kl_lower_bound}
\left\Vert g-\hat g\right\Vert_{\L^2 \left(\manifold\right)}^2 \leq \left(\frac43 \left\Vert g + \offset\right\Vert_{\L^\infty\left(\manifold\right)} + \frac23 \left\Vert \hat g + \offset\right\Vert_{\L^\infty\left(\manifold\right)} \right) \T {g}{\hat g}
\end{equation}
with $\mathcal T$ as in \eqref{eq:defi_Te} at hand (see \cite{bl91}), \eqref{eq:vie_spectral} obviously implies \eqref{eq:vieadd} with $\mathcal T$ as in \eqref{eq:defi_Te} and an index function differing from $\varphi_\psi$ only by a multiplicative constant.

Thus Assumption \ref{ass:additive_sc} is weaker than a spectral source 
condition. In particular, if $F\left(\udag\right) = 0$ on some parts 
of $\manifold$ it may happen that \eqref{eq:vieadd} holds true with an index function better than $\varphi_\psi$.
\end{rem}

Assumption \ref{ass:F} moreover allows us to prove the following corollary, which shows that Theorem \ref{thm:pie} applies for the integrals in \eqref{eq:def_err}:
\begin{cor}\label{cor:apply_concie_ttr}
Let Assumption \ref{ass:F} hold true,  set
\[
R := \sup\limits_{u \in \B} \left\Vert F \left(u\right)\right\Vert_{H^s \left(\manifold\right)}\,,
\]
and consider $Z$ defined in \eqref{eq:def_err} with $\offset > 0$.
Then there exists $\Cconc\geq 1$ depending only on $\manifold$ and $s$ such that
\begin{equation}\label{eq:concie_ppp_ttr}
\Prob{\sup\limits_{u \in \B} Z\left(F\left(u\right)\right) \leq \frac{\rho}{\sqrt{t}}} \geq 1 -\exp\left(-\frac{\rho}{R\max \left\{\offset^{-{\lfloor s \rfloor-1}}, \left|\ln \left(R\right)\right|\right\}\Cconc}\right)
\end{equation}
for all $t \geq 1, \rho \geq R\max \left\{\offset^{-{\lfloor s \rfloor-1}},\left|\ln \left(R\right)\right|\right\}\Cconc$.
\end{cor}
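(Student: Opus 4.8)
The plan is to reduce the supremum $\sup_{u\in\B} Z(F(u))$ to the form handled by Theorem~\ref{thm:pie} by controlling the $H^s(\manifold)$-norm of $\ln(F(u)+\offset)$ in terms of $R=\sup_{u\in\B}\|F(u)\|_{H^s(\manifold)}$ and $\offset$. First I would note that, by definition \eqref{eq:def_err}, $Z(F(u)) = \left|\int_{\manifold}\ln(F(u)+\offset)\,(\mathrm dG_t - \gdag\,\mathrm dx)\right|$, so it suffices to show that $\ln(F(u)+\offset) \in B_s(\tilde R)$ for all $u\in\B$, where $\tilde R = C R\max\{\offset^{-\lfloor s\rfloor-1}, |\ln R|\}$ for a constant $C$ depending only on $\manifold$ and $s$; then Theorem~\ref{thm:pie} with this $\tilde R$ yields \eqref{eq:concie_ppp_ttr} directly (after absorbing $C$ into $\Cconc$).

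The core estimate is therefore a Sobolev chain-rule / Moser-type bound: if $g\in H^s(\manifold)$ with $g\geq 0$ a.e.\ and $\|g\|_{H^s(\manifold)}\leq R$, then $\ln(g+\offset)\in H^s(\manifold)$ with a norm bound of the claimed shape. I would proceed as follows. Since $s>d/2$, Sobolev embedding gives $\|g\|_{\L^\infty(\manifold)}\leq c_s R$ (using $R\geq 1$), so the argument $g+\offset$ of the logarithm lies in the compact interval $[\offset, \offset+c_sR]$. The composition $h(x):=\ln(g(x)+\offset)$ can be estimated by writing its (distributional) derivatives via the Faà di Bruno / Gagliardo–Nirenberg machinery: derivatives of order $k\leq\lfloor s\rfloor$ of $h$ are sums of terms $(\ln)^{(j)}(g+\offset)\cdot\prod_{i}\partial^{\alpha_i}g$ with $\sum|\alpha_i|=k$ and $j$ factors, and $|(\ln)^{(j)}(g+\offset)| = (j-1)!\,(g+\offset)^{-j}\leq (j-1)!\,\offset^{-j}$, which for $j\leq\lfloor s\rfloor$ is bounded by a constant times $\offset^{-\lfloor s\rfloor}$ (or $1$ if $\offset\geq 1$; note $\offset^{-\lfloor s\rfloor-1}\geq\offset^{-\lfloor s\rfloor}$ exactly when $\offset\leq 1$, which is why the exponent $\lfloor s\rfloor-1$... one more power comes from the fractional part, handled below). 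The products $\prod_i\partial^{\alpha_i}g$ are controlled in $\L^2$ by Gagliardo–Nirenberg interpolation against $\|g\|_{H^{\lfloor s\rfloor}(\manifold)}\leq R$ and $\|g\|_{\L^\infty(\manifold)}\lesssim R$, giving a bound polynomial in $R$; since $R\geq 1$ one can crudely bound the polynomial by $C R^{\lfloor s\rfloor}$ or absorb into the $\max\{\cdot\}$ — and the $|\ln R|$ term enters through the $j=0$ contribution $\|\ln(g+\offset)\|_{\L^\infty}\leq |\ln\offset| + \ln(\offset+c_sR)\lesssim \max\{|\ln\offset|,|\ln R|\}$. Finally, to get the full (possibly fractional) $H^s$-norm rather than just $H^{\lfloor s\rfloor}$, I would control the Sobolev–Slobodeckij seminorm of $\partial^\alpha h$ for $|\alpha|=\lfloor s\rfloor$ using that $\ln$ is Lipschitz on $[\offset,\infty)$ with constant $\offset^{-1}$ (this is the source of the extra power, giving $\offset^{-\lfloor s\rfloor-1}$) together with the $H^s$-regularity of $g$; since $\manifold$ has Lipschitz boundary, these localized/fractional estimates are standard.

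I expect the main obstacle to be bookkeeping the dependence of the multiplicative constant on $\offset$ and $R$ cleanly, so that it collapses to exactly $R\max\{\offset^{-\lfloor s\rfloor-1},|\ln R|\}$ up to a constant depending only on $\manifold$ and $s$ — in particular making sure the low-order ($j=0$) logarithmic term and the high-order derivative terms are both dominated by this expression, and that the fractional-order seminorm contributes only one extra inverse power of $\offset$. Once the bound $\|\ln(F(u)+\offset)\|_{H^s(\manifold)}\leq \Cconc' R\max\{\offset^{-\lfloor s\rfloor-1},|\ln R|\}=:\tilde R$ is established uniformly in $u\in\B$, I would invoke Theorem~\ref{thm:pie} with this radius $\tilde R$: it applies since $\tilde R\geq 1$ (as $R\geq 1$ and the $\max$ is $\geq\cdots$; if necessary enlarge $\Cconc$ so that $\tilde R\geq 1$), and it gives, for all $t\geq 1$ and $\rho\geq \tilde R\,\Cconc$,
\[
\Prob{\sup_{u\in\B} Z(F(u)) \leq \frac{\rho}{\sqrt t}} \geq \Prob{\sup_{\g\in B_s(\tilde R)}\left|\int_{\manifold}\g\,(\mathrm dG_t-\gdag\,\mathrm dx)\right| \leq \frac{\rho}{\sqrt t}} \geq 1 - \exp\paren{-\frac{\rho}{\tilde R\,\Cconc}},
\]
which is \eqref{eq:concie_ppp_ttr} after renaming the product of the two constants as a single $\Cconc$ depending only on $\manifold$ and $s$.
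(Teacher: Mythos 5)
Your overall strategy is exactly the paper's: observe that $Z(F(u))=\bigl|\int_\manifold \ln(F(u)+\offset)\,(\mathrm dG_t-\gdag\,\mathrm dx)\bigr|$, establish a uniform bound $\left\Vert\ln(F(u)+\offset)\right\Vert_{H^s(\manifold)}\le \tilde C R\max\{\offset^{-\lfloor s\rfloor-1},|\ln R|\}=:\tilde R$ for all $u\in\B$, and feed $B_s(\tilde R)$ into Theorem \ref{thm:pie}. The only divergence is in how the composition bound is obtained. The paper does not redo the Moser-type analysis: it extends $x\mapsto\ln(x+\offset)$ from the range $[0,R\Vert E_\infty\Vert]$ (which contains $F(u)$ a.e.\ by Sobolev embedding) to a function $\Phi\in C^{\lfloor s\rfloor+1}(\Rset)$ and invokes a cited composition-operator result, $\Vert\Phi\circ g\Vert_{H^s}\le C\Vert\Phi\Vert_{C^{\lfloor s\rfloor+1}}\Vert g\Vert_{H^s}$ for $g\in H^s\cap\L^\infty$ on a Lipschitz domain; the factor $\max\{\offset^{-\lfloor s\rfloor-1},\ln R\}$ is then simply a bound on $\Vert\Phi\Vert_{C^{\lfloor s\rfloor+1}}$ on that interval, the exponent $-\lfloor s\rfloor-1$ coming from the $(\lfloor s\rfloor+1)$-st derivative of the logarithm rather than from the fractional Slobodeckij seminorm as in your account.

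The one step of your self-contained version that does not go through as written is the absorption of the Gagliardo--Nirenberg constants. A Fa\`a di Bruno term with $m\ge 2$ factors carries $|(\ln)^{(m)}(g+\offset)|\le(m-1)!\,\offset^{-m}$ times $\Vert\prod_i\partial^{\alpha_i}g\Vert_{\L^2}\lesssim\Vert g\Vert_{\L^\infty}^{m-1}\Vert g\Vert_{H^{\lfloor s\rfloor}}\lesssim R^m$, so the term is of size $(R/\offset)^{m-1}\offset^{-1}R$; for large $R$ this is \emph{not} dominated by $R\max\{\offset^{-\lfloor s\rfloor-1},|\ln R|\}$, and ``crudely bounding the polynomial by $CR^{\lfloor s\rfloor}$ or absorbing it into the max'' fails as stated, since $R^{\lfloor s\rfloor}\not\lesssim R\max\{\offset^{-\lfloor s\rfloor-1},|\ln R|\}$. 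To land on the constant claimed in the corollary you must either quote a composition theorem of the paper's form (linear in $\Vert g\Vert_{H^s}$ with all $\Phi$-dependence isolated in $\Vert\Phi\Vert_{C^{\lfloor s\rfloor+1}}$), or track these powers of $R$ explicitly and accept a different constant. Everything else --- using $g\ge 0$ to bound $(g+\offset)^{-j}\le\offset^{-j}$, the origin of $|\ln R|$ in the zeroth-order term, and the final invocation of Theorem \ref{thm:pie} with radius $\tilde R$ after enlarging $\Cconc$ --- matches the paper.
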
   
\begin{proof}
W.l.o.g we may assume that $R \geq 1$. Due to Sobolev's embedding theorem and $s > d/2$ we have $\left\Vert F\left(u\right) \right\Vert_{\L^\infty\left(\manifold\right)} \leq R \left\Vert E_{\infty} \right\Vert  $ for all $u \in \B$.

By an extension argument it can be seen from \cite{ms02} that for $\manifold \subset \mathbb R^d$ with Lipschitz boundary, $g \in H^s\left(\manifold\right)\cap \L^\infty \left(\manifold\right)$ and $\Phi \in C^{\lfloor s \rfloor+1} \left(\mathbb R\right)$ one has $\Phi \circ g \in H^s \left(\manifold\right)$ and 
\begin{equation}\label{eq:sobo_norm_est}
\left\Vert \Phi \circ g \right\Vert_{H^s \left(\manifold\right)} \leq C \left\Vert \Phi\right\Vert_{C^{\lfloor s \rfloor+1} \left(\mathbb R\right)} \left\Vert g\right\Vert_{H^s \left(\manifold\right)}
\end{equation}
with $C >0$ independent of $\Phi$ and $g$. 
To apply this result, we first extend the function $x\mapsto\ln\left(x + \offset\right)$ from 
$\left[0, R \left\Vert E_{\infty} \right\Vert  \right]$ (since we have 
$0 \leq F\left(u\right) \leq R\left\Vert E_{\infty} \right\Vert $ a.e.)  
to a function $\Phi$ on the whole real line such that $\Phi \in C^{\lfloor s \rfloor+1} \left(\mathbb R\right)$. Then for any fixed $u \in \B$ we obtain $\Phi \circ F\left(u+\offset\right) \in H^s \left(\manifold\right)$ and since $\Phi_{|_{\left[0, R\left\Vert E_{\infty} \right\Vert \right]}}
\left(\cdot\right) = \ln\left(\cdot + \offset\right)$ and $0 \leq F\left(u\right) \leq R \left\Vert E_{\infty} \right\Vert $ a.e., we have
\(
\Phi \circ \left(F\left(u\right)+\offset\right) 
= \ln \left(F \left(u\right)+\offset\right)
\) a.e. 
Since all derivatives up to order ${\lfloor s \rfloor+1}$ of $x \mapsto \ln \left(x + \offset\right)$ and hence of $\Phi$ on $\left[0, R\left\Vert E_{\infty} \right\Vert \right]$ can be bounded by some constant of order $\max \left\{\offset^{-{\lfloor s \rfloor-1}}, \ln \left(R\left\Vert E_{\infty} \right\Vert \right) \right\}$, the extension and composition procedure described above is bounded, i.e. there exists by \eqref{eq:sobo_norm_est} a constant $\tilde C>0$ independent of $u, R$ and $\offset$ such that
\begin{align*}
\left\Vert \ln \left(F \left(u\right)+\offset\right) \right\Vert_{H^s \left(\manifold\right)}&\leq \tilde C\max \left\{\offset^{-{\lfloor s \rfloor-1}}, \ln \left(R\right) \right\}R
\end{align*}
for all $u \in \B$. Now the assertion follows from Theorem \ref{thm:pie}.
\end{proof}

Now we are able to present our first main result for Poisson data:
\begin{thm}\label{thm:cr_random}
Let the Assumptions \ref{ass:additive_sc} with $\mathcal T$ defined in \eqref{eq:defi_Te} and Assumption \ref{ass:F} be satisfied. 
Moreover, suppose that \eqref{eq:ttr} with $\mathcal{S}$ in 
\eqref{eq:defi_Se} has a global minimizer. 
If we choose the regularization parameter $\alpha = \alpha \left(t\right)$ 
such that 
\begin{equation}\label{eq:stop_a_priori}
\frac{1}{\alpha}  \in -\partial\left(-\varphi\right) \left(\frac{1}{\sqrt{t}}\right)
\end{equation}
then we obtain the convergence rate
\[
\EW{\breg{\ualdel}} = \mathcal O \left(\varphi\left(\frac{1}{\sqrt{t}}\right)\right),\qquad t \to \infty.
\]
\end{thm}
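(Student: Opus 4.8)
The plan is to combine the deterministic error decomposition of Theorem~\ref{thm:det_conv}(\ref{it:det_conv1}) with the concentration inequality of Corollary~\ref{cor:apply_concie_ttr}, and then integrate the resulting tail bound to control the expectation. First I would fix the exposure time $t\ge 1$ and work on the ''good'' event
\[
A_\rho := \left\{ \sup\limits_{u\in\B} Z\left(F\left(u\right)\right) \leq \frac{\rho}{\sqrt t}\right\}\,,
\]
for a free parameter $\rho$. On $A_\rho$, the Poisson-data example following Assumption~\ref{ass:SR} shows that Assumption~\ref{ass:SR} holds with $\Cerr=1$ and $\err = 2\rho/\sqrt t$ (the offset version \eqref{eq:defi_Se}, \eqref{eq:defi_Te} is exactly the one used in Theorem~\ref{thm:cr_random}). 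Hence Theorem~\ref{thm:det_conv}(\ref{it:det_conv1}) applies with these values, giving on $A_\rho$
\[
\beta\breg{\ualdel} \leq \frac{2\rho/\sqrt t}{\alpha} + \left(-\varphi\right)^*\left(-\frac{1}{\alpha}\right)\,.
\]
With the a priori choice \eqref{eq:stop_a_priori}, i.e.\ $\tfrac1\alpha\in-\partial(-\varphi)(\tfrac1{\sqrt t})$, the equality case in Young's inequality \eqref{eq:young} yields $\tfrac1\alpha\cdot\tfrac1{\sqrt t} = -\varphi\left(\tfrac1{\sqrt t}\right) + (-\varphi)^*\left(-\tfrac1\alpha\right)$, so the bound becomes, on $A_\rho$,
\[
\beta\breg{\ualdel} \leq \frac{2\rho-1}{\alpha\sqrt t} + \varphi\left(\frac{1}{\sqrt t}\right) \leq \frac{2\rho}{\alpha\sqrt t} + \varphi\left(\frac{1}{\sqrt t}\right)\,.
\]

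The second step is to estimate $1/\alpha$. Since $-\partial(-\varphi)(\tfrac1{\sqrt t})$ consists of (super)gradients of the concave function $\varphi$ at $\tfrac1{\sqrt t}$, concavity of $\varphi$ together with $\varphi(0)=0$ gives $\tfrac1\alpha\cdot\tfrac1{\sqrt t}\le\varphi\left(\tfrac1{\sqrt t}\right)$, hence $\tfrac1{\alpha\sqrt t}\le\varphi\left(\tfrac1{\sqrt t}\right)$. Therefore, on $A_\rho$,
\[
\beta\breg{\ualdel} \leq \left(2\rho+1\right)\varphi\left(\frac{1}{\sqrt t}\right)\,.
\]
On the complementary event, one needs an a priori upper bound for $\breg{\ualdel}$; this is where boundedness of $\B$ in Assumption~\ref{ass:F} enters. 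From $\B$ bounded and $u^*\in\Xspace^*$ one gets $\breg{u}=\calR(u)-\calR(\udag)-\langle u^*,u-\udag\rangle$; using the minimizing property \eqref{eq:ttr} with a fixed reference element (e.g.\ $\udag\in\B$) and $\mathcal S\ge 0$, $\calR(\ualdel)\le\calR(\udag)+\tfrac1\alpha\mathcal S(\gdag;\gobs)$, and $\mathcal S(\gdag;\gobs)$ is itself a.s.\ finite (indeed $\mathcal S(\gdag;G_t)<\infty$ a.s.\ because $\gdag+\offset\ge\offset>0$); combined with the boundedness of $\langle u^*,u-\udag\rangle$ over $u\in\B$ this produces a deterministic-in-$u$ but $t$- and $\omega$-dependent bound $\breg{\ualdel}\le M(t,\gobs)$. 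A cleaner route, which I would prefer, is to absorb the bad event into the tail by writing, with $\rho_0:=R\max\{\offset^{-\lfloor s\rfloor-1},|\ln R|\}\Cconc$ from Corollary~\ref{cor:apply_concie_ttr},
\[
\EW{\breg{\ualdel}} = \int_0^\infty \Prob{\breg{\ualdel} > \tau}\,\mathrm d\tau\,,
\]
and using $\Prob{A_\rho^c}\le\exp(-\rho/\rho_0)$ for $\rho\ge\rho_0$. Splitting the integral at $\tau_0:=(2\rho_0+1)\varphi(\tfrac1{\sqrt t})/\beta$ and, for $\tau>\tau_0$, choosing $\rho=\rho(\tau)$ so that $(2\rho+1)\varphi(\tfrac1{\sqrt t})/\beta=\tau$ (i.e.\ $\rho\ge\rho_0$ and on $A_{\rho(\tau)}$ we have $\breg{\ualdel}\le\tau$), gives
\[
\EW{\breg{\ualdel}} \leq \tau_0 + \int_{\tau_0}^\infty \Prob{A_{\rho(\tau)}^c}\,\mathrm d\tau
\leq \tau_0 + \int_{\tau_0}^\infty \exp\left(-\frac{\rho(\tau)}{\rho_0}\right)\mathrm d\tau\,.
\]
Since $\rho(\tau)=\tfrac12\big(\beta\tau/\varphi(\tfrac1{\sqrt t})-1\big)$ is affine in $\tau$, the remaining integral is an elementary exponential integral equal to $\tfrac{2\varphi(1/\sqrt t)}{\beta}\rho_0\exp(-\rho_0/\rho_0\cdot(\dots))$; a short computation shows it is bounded by $C\varphi(\tfrac1{\sqrt t})$ with $C$ depending only on $\rho_0$ and $\beta$. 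Collecting terms yields $\EW{\breg{\ualdel}}\le C'\varphi(\tfrac1{\sqrt t})$ with $C'$ independent of $t$, which is the claim.

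The main obstacle I anticipate is the bookkeeping around the ''bad'' event and making the tail integral genuinely finite and of the right order: one must verify that $\breg{\ualdel}$ grows at most \emph{polynomially} (in fact one only needs integrability of the exponential tail against it, so even sub-exponential growth suffices), and that the constant $\rho_0$ from Corollary~\ref{cor:apply_concie_ttr}, which depends on $R$, $\offset$, $s$, $\manifold$, is a genuine constant in the limit $t\to\infty$ (it is, since none of these depend on $t$). A secondary technical point is that the parameter choice \eqref{eq:stop_a_priori} is only an inclusion, so if $\varphi$ is not differentiable one should fix a measurable selection $\tfrac1{\alpha(t)}\in-\partial(-\varphi)(\tfrac1{\sqrt t})$; the estimates above use only the Young equality and concavity, both of which hold for any such selection. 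Everything else — the passage from $A_\rho$ to the deterministic rate, and the identification $\tfrac1{\alpha\sqrt t}\le\varphi(\tfrac1{\sqrt t})$ — is routine convex analysis already assembled in Theorem~\ref{thm:det_conv} and the preceding discussion.
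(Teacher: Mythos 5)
Your proposal is correct and follows essentially the same route as the paper: on the event $A_\rho$ you invoke Assumption \ref{ass:SR} with $\Cerr=1$, $\err=2\rho/\sqrt t$ and Theorem \ref{thm:det_conv} to get $\beta\breg{\ualdel}\leq C\rho\,\varphi\bigl(1/\sqrt t\bigr)$, exactly as in the paper, and then integrate the exponential tail from Corollary \ref{cor:apply_concie_ttr} over the level $\rho$. The only (immaterial) differences are that the paper performs this last step discretely, summing over the layers $E_k\setminus E_{k-1}$ with $\rho_k=k/c$ rather than using the continuous layer-cake formula, and that it cites Theorem \ref{thm:det_conv}(\ref{it:det_conv2}) for the value of the infimum where you compute it directly from the equality case of Young's inequality.
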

\begin{proof}
First note that Assumption \ref{ass:SR} holds true with $\Cerr = 1$ whenever the bound $\err$ fulfills \eqref{eq:cond_err_poisson}. By Corollary \ref{cor:apply_concie_ttr} the right-hand side of \eqref{eq:cond_err_poisson} is bounded by $2\frac{\rho}{\sqrt{t}}$ with probability greater or equal $1- \exp\left(-c\rho\right)$ with $\rho\geq 1/c$, 
\[
c = \left(R \max\left\{\offset^{-{\lfloor s \rfloor-1}}, \left|\ln\left(R\right)\right| \Cconc\right\}\right)^{-1}\,,
\]
and $\Cconc$ as in Corollary \ref{cor:apply_concie_ttr}. 
Now let $\rho_k:= c^{-1} k, k\in \mathbb N$ and consider the events
\[
E_0 := \emptyset, \qquad E_k := \left\{\sup\limits_{u \in \B} Z\left(F\left(u\right)\right) \leq \frac{\rho_k}{\sqrt{t}}\right\}, \qquad k \in \mathbb N
\]
with $Z$ as defined in \eqref{eq:def_err}. Corollary \ref{cor:apply_concie_ttr} implies
\[
\Prob{E_k^c} \leq \exp\left(-k\right)
\]
and on $E_k$ Assumption \ref{ass:SR} holds true with $\Cerr = 1$ and $\err = 2 \sup_{u \in \B} Z\left(F\left(u\right)\right) \leq 2\rho_k / \sqrt{t}$. Thus Theorem \ref{thm:det_conv}(\ref{it:det_conv1}) implies
\[
\max\limits_{E_k} \breg{\ualdel} \leq\frac{1}{\beta}\left(\left(-\varphi\right)^* \left(-\frac{1}{\alpha} \right) + \frac{2\rho_k}{\alpha\sqrt{t}}\right)
\leq  \frac{2 \rho_k}{\beta}\left(\left(-\varphi\right)^* \left(-\frac{1}{\alpha} \right) + \frac{1}{\alpha\sqrt{t}}\right)
\]
for all $k \in \mathbb N$ and $\alpha > 0$. According to 
Theorem~\ref{thm:det_conv}(\ref{it:det_conv2}) the infimum of the right 
hand side is attained at $\alpha$ defined in \eqref{eq:stop_a_priori}, and 
\[
\max\limits_{E_k} \breg{\ualdel} \leq \frac{2\rho_k}{\beta} \varphi \left(\frac{1}{\sqrt{t}}\right)
\]
for all $k \in \mathbb N$ with $C\left(k\right) =  \frac{2}{\beta}c^{-1} k$. Now we obtain
\begin{align*}
\EW{\breg{u_\alpha}}  &=\sum\limits_{k=1}^{\infty} \Prob{E_k\setminus E_{k-1}}
 \EW{\breg{u_\alpha} ~\bigg|~ E_k \setminus E_{k-1}} \\[0.1cm]
& \leq \sum\limits_{k=1}^{\infty} \Prob{E_k\setminus E_{k-1}} 
\max\limits_{E_k} \breg{\ualdel} \\[0.1cm]
& \leq \Prob{E_1} \frac{2\rho_1}{\beta}\varphi\left(\frac{1}{\sqrt{t}}\right)+ \sum\limits_{k=2}^{\infty} \Prob{E_{k-1}^c} \frac{2\rho_k}{\beta}
\varphi \left(\frac{1}{\sqrt{t}}\right)\\[0.1cm]
& \leq \frac{2}{\beta} c^{-1} \left(\sum\limits_{k=1}^{\infty} \exp\left(-\left(k-1\right)\right) k\right)\varphi\left(\frac{1}{\sqrt{t}}\right).
\end{align*}
The sum converges and the proof is complete.
\end{proof}

\section{A Lepski{\u\i}-type parameter choice rule}\label{sec:lepskij}
Usually the parameter choice rule \eqref{eq:stop_a_priori} is not implementable
since it requires a priori knowledge of the function $\varphi$ characterizing
the smoothness of the unknown solution $\udag$. To adapt to unknown smoothness
of the solution, a posteriori parameter choice rules have to be used.
In a deterministic context the most widely used such rule is 
\emph{discrepancy principle}. However, in our context
is not applicable in an obvious way since $\mathcal S$
approximates $\mathcal T$ only up to the unknown constant 
$\EW{\S{\gdag}{G_t}}$.

In the following we will describe and analyse the Lepsk{\u\i} principle
as described and analyzed in the context of inverse problems by
Math\'e and Pereverzev \cite{mp03,m06}.
Lepski{\u\i}'s balancing principle requires a metric on $\Xspace$, and hence we assume in the following that there exists a constant $\Cbreg > 0$ and a number $q \geq 1$ such that
\begin{equation}\label{eq:R_for_Lepskii}
\left\Vert u-\udag\right\Vert_{\Xspace}^q \leq \Cbreg \breg{u} \qquad \text{for all} \qquad u \in \B.
\end{equation}
This is fulfilled trivially with $q=2$ and $\Cbreg = 1$ if $\Xspace$ is a Hilbert space and $\calR\left(u\right) = \left\Vert u-u_0\right\Vert_{\Xspace}^2$ (then we have equality in \eqref{eq:R_for_Lepskii}). Moreover for a $q$-convex Banach space $\Xspace$ and $\calR\left(u\right) = \left\Vert u\right\Vert_{\Xspace}^q$ the estimate \eqref{eq:R_for_Lepskii} is valid (see \cite{XR:91}). 
Besides this special cases of norm powers, \eqref{eq:R_for_Lepskii} can be fulfilled for other choices of $\calR$. E.g.\ for maximum entropy  regularization, i.e. $\calR \left(u\right) = \int_a^b u \ln \left(u\right) \,\mathrm d x$, the Bregman distance coincides with the Kullback-Leibler divergence, and we have seen in Remark \ref{rem:vie_poisson} that  \eqref{eq:R_for_Lepskii} holds true in this situation.

\smallskip

The deterministic convergence analysis from Section \ref{sec:det_result} already provides an error decomposition. 
Assuming $\beta \geq 1/2$, 
Theorem \ref{thm:det_conv}(\ref{it:det_conv1}) together with \eqref{eq:R_for_Lepskii} states that
\begin{equation}\label{eq:error_dec_final}
\left\Vert u_\alpha - \udag\right\Vert_{\Xspace} \leq \frac12 \left(f_{\rm app} \left(\alpha\right) + f_{\rm noi} \left(\alpha\right)\right)\qquad\text{for all}\qquad \alpha > 0
\end{equation}
with the \emph{approximation error} $f_{\rm app}^\beta\left(\alpha\right)$ and the \emph{propagated data noise error} $f_{\rm noi}^\beta\left(\alpha\right)$ defined by
\begin{equation}\label{eq:error_functions}
f_{\rm app} \left(\alpha\right) := 
2\left(2\Cbreg\left(-\varphi\right)^* \left(- \frac{1}{\alpha}\right)\right)^{\frac1q}
\quad\text{and}\quad 
f_{\rm noi} \left(\alpha\right) := 2\left(2\Cbreg\frac{\err}{\alpha}\right)^{\frac1q}\,.
\end{equation}
Here the constant $2$ in front of $\Cbreg$ is an estimate of $1/\beta$. 
For the error decomposition \eqref{eq:error_dec_final} it is important to note that $f_{\rm app}$ is typically unknown, whereas $f_{\rm noi}$ is known if the upper bound $\err$ is available. 
But due to Corollary \ref{cor:apply_concie_ttr} the error is bounded by $\rho / \sqrt{t}$ with probability $1-\exp\left(-c\rho\right)$. This observation
is fundamental in the proof of the following theorem:
\begin{thm}\label{thm:a_posteriori}
Let Assumptions \ref{ass:additive_sc} and \ref{ass:F} with $\beta \in \left[\frac{1}{2}, \infty\right)$ and $\mathcal S$ and 
$\mathcal T$ as in \eqref{eq:defi_Se} and \eqref{eq:defi_Te} be fulfilled and suppose \eqref{eq:R_for_Lepskii} holds true. Suppose that  \eqref{eq:ttr} has a global minimizer and let $\offset > 0$, $r > 1$, $R := \sup_{u \in \B} \left\Vert F\left(u\right)\right\Vert_{H^s \left(\manifold\right)} < \infty$ and $\tau \geq \frac14 R \max \left\{\offset^{-{\lfloor s \rfloor-1}},\left|\ln\left(R\right)\right|\right\} \Cconc$. Define the sequence
\[
\alpha_j := \frac{\tau \ln\left(t\right)}{\sqrt{t}}r^{2j-2}, \qquad j \in \mathbb N.
\]
Then with $m := \min\left\{j \in \mathbb N ~\big|~ \alpha_j \geq 1\right\}$ the choice 
\begin{equation}\label{eq:stop_a_posteriori}
j_{\rm bal} := \min \left\{j \in  \left\{1, ..., m\right\} ~\big|~ \left\Vert u_{\alpha_i} - u_{\alpha_j}\right\Vert \leq 4 \left(4 \Cbreg\right)^{\frac1q} r^{\frac{2-2i}{q}} \text{ for all } i < j \right\} 
\end{equation}
yields
\[
\EW {\left\Vert u_{\alpha_{j_{\rm bal}}} - \udag\right\Vert_{\Xspace}^q} = \mathcal O \left(\varphi \left(\frac{\ln\left(t\right)}{\sqrt{t}}\right)\right) \qquad\text{as}\qquad t\to \infty.
\]
\end{thm}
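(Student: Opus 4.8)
The plan is to run the standard Lepski\u\i/balancing argument, but conditioned on the family of high-probability events $E_k$ from the proof of Theorem~\ref{thm:cr_random}, and then integrate over $k$. First I would fix $t$ and, as in that proof, introduce the events $E_k = \{\sup_{u\in\B} Z(F(u)) \le \rho_k/\sqrt t\}$ with $\rho_k := c^{-1}k$ and $c$ the constant from Corollary~\ref{cor:apply_concie_ttr}, so that $\Prob{E_k^c}\le e^{-k}$. On $E_k$ Assumption~\ref{ass:SR} holds with $\Cerr=1$ and $\err = 2\rho_k/\sqrt t$; since $\tau$ is chosen $\ge \frac14 R\max\{\offset^{-\lfloor s\rfloor-1},|\ln R|\}\Cconc = \frac14 c^{-1}$, we have $\err \le 2\rho_k/\sqrt t = 2c^{-1}k/\sqrt t \le 8\tau k/\sqrt t \le 8k\,\alpha_j\,/\ln(t)$ on the whole grid, i.e. $f_{\rm noi}(\alpha_j)$ is controlled on $E_k$ by a multiple of $\sqrt[q]{k}$ times the known quantity $(2\Cbreg\, 8\tau\,\ln(t)/(\ln(t)\sqrt t\,\alpha_j^{-1}))^{1/q}$ — the point being that the a~posteriori threshold in \eqref{eq:stop_a_posteriori} is, up to the factor $\sqrt[q]{k}$, an upper bound for $f_{\rm noi}(\alpha_i)$.

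The core is the deterministic balancing lemma. On $E_k$ the error decomposition \eqref{eq:error_dec_final}--\eqref{eq:error_functions} gives $\|u_{\alpha_i}-\udag\|_\Xspace \le \frac12(f_{\rm app}(\alpha_i)+f_{\rm noi}(\alpha_i))$ with $f_{\rm app}$ nonincreasing in $\alpha$ (since $(-\varphi)^*(-1/\alpha)$ is) and $f_{\rm noi}(\alpha_i) \le C\sqrt[q]{k}\,r^{(2-2i)/q}$ for an explicit constant. A standard two-part argument then applies: (i) by the triangle inequality and the decomposition, for $i<j$ one has $\|u_{\alpha_i}-u_{\alpha_j}\| \le \frac12(f_{\rm app}(\alpha_i)+f_{\rm app}(\alpha_j)+f_{\rm noi}(\alpha_i)+f_{\rm noi}(\alpha_j)) \le f_{\rm app}(\alpha_i) + f_{\rm noi}(\alpha_i)$, so once $j$ passes the ``oracle'' index $j_\star := \min\{j: f_{\rm app}(\alpha_j)\le f_{\rm noi}(\alpha_j)\}$ the condition in \eqref{eq:stop_a_posteriori} is met (with the $\sqrt[q]{k}$-inflated threshold $4(4\Cbreg k)^{1/q} r^{(2-2i)/q}$, which one should use in the conditioned analysis), giving $j_{\rm bal}\le j_\star$; (ii) for the selected index, $\|u_{\alpha_{j_{\rm bal}}}-\udag\| \le \|u_{\alpha_{j_{\rm bal}}}-u_{\alpha_{j_\star}}\| + \|u_{\alpha_{j_\star}}-\udag\|$, where the first term is bounded by the defining inequality of $j_{\rm bal}$ evaluated at $i=j_{\rm bal}\le j_\star$ and hence by $O(\sqrt[q]{k}\,r^{(2-2j_{\rm bal})/q}) = O(\sqrt[q]{k}\,f_{\rm noi}(\alpha_{j_{\rm bal}})) \le O(\sqrt[q]{k}\,(f_{\rm app}(\alpha_{j_\star-1})\vee f_{\rm noi}(\alpha_{j_\star})))$, using that the grid is geometric with ratio $r^2$ so neighbouring values of $f_{\rm app}$ and $f_{\rm noi}$ differ by bounded factors; and the second term is $\le \frac12(f_{\rm app}(\alpha_{j_\star})+f_{\rm noi}(\alpha_{j_\star})) \le f_{\rm app}(\alpha_{j_\star})$. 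Combining, on $E_k$,
\[
\|u_{\alpha_{j_{\rm bal}}}-\udag\|_\Xspace^q \le C\,k\,\bigl((-\varphi)^*(-1/\bar\alpha) + \bar\err/\bar\alpha\bigr)
\le C\,k\,\varphi\!\left(\frac{\ln t}{\sqrt t}\right),
\]
where $\bar\alpha$ is the a~priori-optimal choice for noise level $\bar\err \asymp \ln(t)/\sqrt t$ and the last step is exactly the computation in Theorem~\ref{thm:det_conv}(\ref{it:det_conv2}); one must check $\bar\alpha$ lies (up to a bounded factor) in the grid $\{\alpha_j\}$, which holds because the grid covers $[\tau\ln(t)/\sqrt t, r^2]$ and for the source conditions of interest $\bar\alpha\to 0$ and is $\gtrsim \ln(t)/\sqrt t$.

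Finally I would integrate over the $E_k$ exactly as in the proof of Theorem~\ref{thm:cr_random}:
\[
\EW{\|u_{\alpha_{j_{\rm bal}}}-\udag\|_\Xspace^q}
= \sum_{k\ge 1}\Prob{E_k\setminus E_{k-1}}\,\EW{\|u_{\alpha_{j_{\rm bal}}}-\udag\|_\Xspace^q \mid E_k\setminus E_{k-1}}
\le C\varphi\!\left(\tfrac{\ln t}{\sqrt t}\right)\sum_{k\ge 1} e^{-(k-1)}k < \infty,
\]
since on $E_k\setminus E_{k-1}\subset E_k$ the bound above applies with the factor $k$, and $\Prob{E_k\setminus E_{k-1}}\le \Prob{E_{k-1}^c}\le e^{-(k-1)}$ (with the first term handled by $\Prob{E_1}\le 1$). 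The main obstacle I expect is the bookkeeping in part (i)--(ii) of the balancing lemma \emph{with the $k$-dependent threshold}: one has to make sure that the inflation of the a~posteriori tolerance by the factor $(k)^{1/q}$ still lets the oracle index $j_\star$ satisfy the selection criterion (so $j_{\rm bal}\le j_\star$ a.s. on $E_k$) while only costing a factor linear in $k$ in the final bound, and to verify that the extra $\ln(t)$ in $\alpha_j$ (needed so that the grid reaches down to the noise level and so that $\Prob{E_k^c}$ summed against the grid length $m = O(\ln t)$ stays harmless) is absorbed into $\varphi(\ln(t)/\sqrt t)$ rather than spoiling the rate; for the Hölder and logarithmic examples in \eqref{eqs:special_ind_funct} this is immediate.
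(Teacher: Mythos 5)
There is a genuine gap in your plan: the layered-events argument from Theorem \ref{thm:cr_random} does not transfer to the Lepski{\u\i} rule, because the selection rule \eqref{eq:stop_a_posteriori} uses a \emph{fixed} tolerance $4(4\Cbreg)^{1/q}r^{(2-2i)/q}$ that does not depend on $k$. Your proposed fix --- running the balancing lemma ``with the $k$-inflated threshold $4(4\Cbreg k)^{1/q}r^{(2-2i)/q}$'' --- is not available: $j_{\rm bal}$ is defined once and for all by the statement of the theorem, and you cannot change the estimator according to which event $E_k$ occurred. Concretely, on $E_k$ one has $\err \leq 2c^{-1}k/\sqrt{t}$, hence $f_{\rm noi}(\alpha_i)\leq 2\left(16\Cbreg k/\ln(t)\right)^{1/q}r^{(2-2i)/q}$, and the step ``$j_{\rm bal}\leq j_\star$'' in the balancing lemma requires the fixed tolerance to dominate $2f_{\rm noi}(\alpha_i)$; this holds only for $k \lesssim \ln(t)/4$. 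For larger $k$ the oracle inequality for the \emph{given} rule \eqref{eq:stop_a_posteriori} is simply false, the bound ``$\|u_{\alpha_{j_{\rm bal}}}-\udag\|^q \leq Ck\,\varphi(\ln t/\sqrt t)$ on $E_k$'' is unjustified, and the sum $\sum_k e^{-(k-1)}k$ cannot be formed. Since the $E_k$ are nested, the usable part of your union is just the single event $E_{\lfloor \ln(t)/4\rfloor}$ --- which is exactly the event $A_\rho$ with $\rho(t)=\tau\ln(t)$ that the paper works with.

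What is missing is the treatment of the complementary event. The paper handles $A_\rho^c$ by the trivial bound $\max_{A_\rho^c}\|u_{\alpha_{j_{\rm bal}}}-\udag\|_{\Xspace}^q \leq \mathrm{diam}(\B)^q$ (finite because $\B$ is bounded by Assumption \ref{ass:F}) and then checks, using $2\tau c\geq \tfrac12$ and the concavity of $\varphi^2$, that $\Prob{A_\rho^c}\leq \exp(-c\tau\ln t)=t^{-c\tau}\leq \varphi(\ln t/\sqrt t)/\varphi(1)$; this is precisely where the logarithmic growth of $\rho(t)$ is needed and where the extra $\ln(t)$ in the rate originates. You never invoke the boundedness of $\B$ or any bound on the bad event, so your argument as written cannot close. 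The deterministic core of your plan (the two-part balancing argument, the comparison of the grid minimum with the infimum over $\alpha>0$, and the identification of the optimal $\alpha$ via Theorem \ref{thm:det_conv}(\ref{it:det_conv2})) matches the paper, which simply cites the oracle inequality of Math\'e instead of reproving it; the probabilistic bookkeeping is where the proposal fails.
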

\begin{proof}
If $t \geq \exp\left(4\right)$, the assumptions of Corollary \ref{cor:apply_concie_ttr} are fulfilled with $\rho\left(t\right) := \tau \ln \left(t\right)$. Then with $Z$ as in \eqref{eq:def_err} the event
\[
A_\rho := \left\{ \sup\limits_{u \in \B} Z\left(F\left(u\right)\right) \leq \frac{\rho\left(t\right)}{\sqrt{t}}\right\}
\]
has probability  $\Prob{A_\rho^c} \leq \exp\left(-c\rho\left(t\right)\right)$ with $c =\left(R \max \left\{\offset^{-{\lfloor s \rfloor-1}}, \left|\ln\left(R\right)\right|\right\} \Cconc\right)^{-1}$ by Corollary \ref{cor:apply_concie_ttr}. Moreover, as we have seen in \eqref{eq:error_dec_final}, on $A_\rho$ the error decomposition
\[
\left\Vert u_j- \udag\right\Vert_{\Xspace} \leq \frac12 \left(\phi\left(j\right) + \psi\left(j\right)\right)
\]
holds true with
\[
\psi\left(j\right)= 2 \left(4 \Cbreg\right)^{\frac1q} \left(\frac{\rho\left(t\right)}{\sqrt{t}\alpha_j}\right)^{\frac1q}= 2 \left(4 \Cbreg\right)^{\frac1q} r^{\frac{2-2j}{q}} 
\]
and $\phi = f_{\rm app}$ as in \eqref{eq:error_functions}. Note that $2 \psi \left(i\right)$ corresponds to the required bound for $\left\Vert u_{\alpha_i} - u_{\alpha_j}\right\Vert_{\Xspace}$ in \eqref{eq:stop_a_posteriori}. The function $\psi$ is obviously non-increasing and fulfills $\psi \left(j\right) \leq r^{2/q}\left(j+1\right)$ and it can be seen by elementary computations that $\phi$ is monotonically increasing. Now \cite[Cor. 1]{m06} implies the so-called oracle inequality
\[
\max\limits_{A_\rho} \left\Vert u_{\alpha_{j_{\rm bal}}} - \udag\right\Vert_{\Xspace} \leq 3 r^{\frac{2}{q}} \min\left\{ \phi\left(j\right) + \psi\left(j\right) ~\big|~ j\in \left\{1, ..., m\right\}\right\}. 
\]
By inserting the definitions of $\phi$ and $\psi$ we find
\begin{equation}\label{eq:proof_lepskij_aux}
\max\limits_{A_\rho} \left\Vert u_{\alpha_{j_{\rm bal}}} - \udag\right\Vert_{\Xspace}^q \leq 4 r^2 12^q \Cbreg \min\limits_{j=1,...,m} \left(\left(-\varphi\right)^* \left(-\frac{1}{\alpha_j}\right) +  \frac{\rho\left(t\right)}{\sqrt{t}\alpha_j} \right)
\end{equation}
and obviously the minimum over $\alpha_1, ...,\alpha_m$ can be replaced up to some constant depending only on $r$ by the infimum over $\alpha \geq \alpha_1$
if $t$ is sufficiently large. By Theorem \ref{thm:det_conv}(\ref{it:det_conv2})
the sum $\left(-\varphi\right)^* \left(-1/\alpha\right) +  \rho\left(t\right)/\left(\sqrt{t}\alpha\right)$ attains its minimum 
over $\alpha\in (0,\infty)$ 
at $\alpha =\alpha_{\rm opt}$ if and only if 
$1/\alpha_{\rm opt} \in-\partial\left(-\varphi\right) \left(\rho\left(t\right)/\sqrt{t}\right)$. 
Note that $\rho\left(t\right)/\sqrt{t} = \alpha_1$. By elementary arguments from convex analysis we find using the concavity of $\varphi$ that
\[
\frac{1}{\alpha_{\rm opt}} \leq-\inf\partial\left(-\varphi\right) \left(\alpha_1\right) = \lim\limits_{h \searrow 0} \frac{\varphi\left(\alpha_1\right) - \varphi \left(\alpha_1-h\right)}{h} \leq \frac{\varphi\left(\alpha_1\right) - \varphi\left(s\right)}{\alpha_1 - s}
\]
for all $0 \leq s \leq \alpha_1$. Thus choosing $s = 0$ shows that $\alpha_1 / \alpha_{\rm opt} \leq \varphi \left(\alpha_1 \right) = \varphi \left(\rho\left(t\right)/\sqrt{t}\right)$ for all $t >0$. As the right-hand side decays to $0$ as  $t \to \infty$, we have $\alpha_1 \leq \alpha_{\rm opt}$ 
for $t$ sufficiently large. Therefore, 
the minimum in \eqref{eq:proof_lepskij_aux} can indeed be replaced 
(up to some constant) by the infimum over all $\alpha >0$ 
(see \cite[Lem. 3.42]{w12} for details). 
Defining $\text{diam}\left(\mathfrak B\right) := \sup_{u,v \in \mathfrak B} \left\Vert u - v\right\Vert_{\Xspace}$ which is finite 
by Assumption \ref{ass:F} we find from \eqref{eq:proof_lepskij_aux} and 
Theorem \ref{thm:det_conv} that
\begin{align*}
\EW{\left\Vert u_{n_{\rm bal}} - \udag\right\Vert_{\Xspace}^q} &\leq \Prob{A_\rho} \max\limits_{A_\rho} \left\Vert u_{\alpha_{j_{\rm bal}}} - \udag\right\Vert_{\Xspace}^q + \Prob{A_\rho^c} \max\limits_{A_\rho^c} \left\Vert u_{\alpha_{j_{\rm bal}}} - \udag\right\Vert_{\Xspace}^q  \\[0.1cm]
& \leq C\varphi\left(\frac{\ln\left(t\right)}{\sqrt{t}}\right) + \exp\left(-c\rho\left(t\right)\right) \text{diam} \left(\mathfrak B\right)^q.
\end{align*}
with some constant $C >0$. 
Due to the definition of $\rho$, $2\tau c \geq \frac12$, $\ln\left(t\right) \geq 1$ and $\ln\left(t\right)/\sqrt{t} <1$ we obtain
\[
\exp\left(-c\rho\left(t\right)\right) = \left(\frac{1}{\sqrt{t}}\right)^{2\tau c} \leq \left(\frac{\ln\left(t\right)}{\sqrt{t}}\right)^{2\tau c} \leq \sqrt{\frac{\ln\left(t\right)}{\sqrt{t}}} \leq \frac{1}{\varphi\left(1\right)} \varphi\left(\frac{\ln\left(t\right)}{\sqrt{t}}\right)
\]
using  the concavity of $\varphi^2$. This proves the assertion.
\end{proof}

Note that the constants $R$ and $\Cconc$ - which are necessary to ensure a proper choice of the sequence $\alpha_j$ and hence for the implementation of this Lepski{\u\i}-type balancing principle - can be calculated in principle
(assuming e.g.\ the scaling condition $\|\gdag\|_{\L^1(\manifold)}=1$). 
Thus Theorem \ref{thm:a_posteriori} yields convergence rates in expectation for a completely adaptive algorithm.

\smallskip

Comparing the rates in Theorems \ref{thm:cr_random} and \ref{thm:a_posteriori} 
note that we have to pay a logarithmic factor for adaptation to 
unknown smoothness by the Lepski{\u\i} principle. It is known 
(see \cite{t00}) that in some cases the loss of such a logarithmic
factor is inevitable.

\appendix

\section{Proof of Theorem \ref{thm:pie}}\label{sec:app}

\renewcommand{\thesection}{\Alph{section}}

\newcommand{\Cgdag}{c_1}
\newcommand{\Cext}{c_2}
In this section we will prove the uniform concentration inequality stated in Theorem \ref{thm:pie}. 
Our result is based on the work of Reynaud-Bouret \cite{rb03} who proved the following concentration inequality:
\begin{lem}[{\cite[Corollary 2]{rb03}}]\label{lem:concie}
Let $N$ be a Poisson process with finite mean measure $\nu$. Let $\left\{f_a\right\}_{a \in A}$ be a countable family of functions with values in $\left[-b,b\right]$ and define 
\[
Z := \sup\limits_{a \in A} \left|\int\limits_{\manifold} f_a\left(x\right) \left(\mathrm d N - \mathrm d \nu\right) \right| \qquad \text{and}\qquad v_0 := \sup\limits_{a \in A} \int\limits_{\manifold} f_a^2\left(x\right) \,\mathrm d \nu.
\]
Then for all positive numbers $\rho$ and $\varepsilon$ it holds
\[
\Prob {Z \geq \left(1+ \varepsilon\right) \EW{Z} + \sqrt{12 v_0 \rho} + \kappa \left(\varepsilon\right) b\rho} \leq \exp\left(-\rho\right)
\]
where $\kappa \left(\varepsilon\right) = 5/4 + 32 / \varepsilon$.
\end{lem}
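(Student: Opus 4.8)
The statement is a Talagrand/Bousquet-type concentration inequality for the supremum of a Poisson functional, and the natural route is the \emph{entropy method}: Herbst's argument combined with a modified logarithmic Sobolev inequality for functionals of a Poisson process. The plan is as follows. First I would introduce the add-one-point difference operator $D_x \Phi := \Phi\left(N + \delta_x\right) - \Phi\left(N\right)$ and recall the modified log-Sobolev inequality for a Poisson process $N$ with mean measure $\nu$, which for bounded $\Phi$ reads
\[
\mathrm{Ent}\!\left[e^{\Phi}\right] \le \EW{\int\limits_{\manifold} \tau\!\left(D_x \Phi\right) e^{\Phi}\,\mathrm d\nu\left(x\right)}, \qquad \tau\left(u\right):=e^{u}-u-1.
\]
This is the Poisson analogue of the tensorized log-Sobolev inequality underlying Bousquet's bound for empirical processes. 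It can be obtained either from a covariance/Clark--Ocone representation for Poisson functionals (Wu) or as the limit of the discrete log-Sobolev inequality on an increasingly fine partition of $\manifold$, on each cell of which the point count is an independent Poisson variable.

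Second, I would specialise to $\Phi = \lambda Z$ and control the two quantities governing the right-hand side. Since $\int_{\manifold} f_a\,(\mathrm dN - \mathrm d\nu)$ changes by exactly $f_a\left(x\right)$ when a point is inserted at $x$, and $\left|f_a\left(x\right)\right|\le b$, one has the pointwise bound $\left|D_x Z\right|\le b$. The delicate step is the \emph{energy} $\int_{\manifold}\left(D_x Z\right)^2\,\mathrm d\nu$: writing $a^*$ for the (approximate) maximising index at a given configuration, $D_x Z$ is essentially $\pm f_{a^*}\left(x\right)$, so the energy is bounded by $\sup_a\int_{\manifold} f_a^2\,\mathrm d\nu = v_0$ plus cross terms, and a careful accounting shows that the effective Talagrand variance proxy is $v := v_0 + 2b\,\EW{Z}$. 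This is precisely where the factor $\left(1+\varepsilon\right)\EW{Z}$ in the final bound originates.

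Third, I would run Herbst's argument. Setting $H\left(\lambda\right):=\log \EW{e^{\lambda\left(Z-\EW{Z}\right)}}$, the log-Sobolev inequality together with the energy bounds yields a differential inequality of the form $\lambda H'\left(\lambda\right)-H\left(\lambda\right)\le \psi\left(\lambda\right)$, whose explicit solution bounds the cumulant generating function of $Z-\EW{Z}$ by a Bennett-type function of $v$ and $b$. A Chernoff bound---applying Markov's inequality to $e^{\lambda Z}$ and optimising over $\lambda>0$---then converts this into a tail estimate in terms of $v$. Splitting $\sqrt{2v\rho}=\sqrt{2\left(v_0+2b\EW{Z}\right)\rho}$ by the elementary inequality $2\sqrt{b\EW{Z}\rho}\le \varepsilon\EW{Z}+b\rho/\varepsilon$ produces the summands $\sqrt{12 v_0\rho}$ and $\kappa\left(\varepsilon\right)b\rho$ together with the $\left(1+\varepsilon\right)\EW{Z}$ term, with $\kappa\left(\varepsilon\right)=5/4+32/\varepsilon$ emerging after the constants are tuned.

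I expect the main obstacle to be the second step: obtaining the \emph{sharp} control of the energy term so that the variance proxy is $v_0+2b\EW{Z}$ rather than a cruder bound, and then propagating this sharpness through the Herbst integration to recover the precise constants $12$ and $\kappa\left(\varepsilon\right)$ rather than arbitrary ones. Throughout, the countability of $A$ is used to guarantee measurability of $Z$ and to legitimise the manipulation of the supremum under $D_x$; for an uncountable index set one first passes to a countable dense subfamily.
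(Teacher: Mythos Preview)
The paper does not prove this lemma at all: it is quoted verbatim as \cite[Corollary~2]{rb03} and used as a black box in the proof of Theorem~\ref{thm:pie}. So there is no ``paper's own proof'' to compare against; the authors simply import the result.

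Your outline is, however, a faithful sketch of how Reynaud-Bouret actually establishes the inequality in \cite{rb03}. The route via a modified log-Sobolev inequality for Poisson functionals (with the add-one-point operator $D_x$), identification of the variance proxy $v_0+2b\,\EW{Z}$, Herbst integration, and the final $\sqrt{ab}\le \varepsilon a/2 + b/(2\varepsilon)$ splitting to isolate the $(1+\varepsilon)\EW{Z}$, $\sqrt{12v_0\rho}$ and $\kappa(\varepsilon)b\rho$ terms is exactly the machinery behind the cited corollary. Your caveat about the sharp energy bound being the delicate step is also accurate: getting $v_0+2b\,\EW{Z}$ rather than something cruder is what distinguishes the Bousquet/Reynaud-Bouret form from earlier Talagrand-type bounds. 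In short, your proposal is correct as a proof strategy, but for the purposes of this paper it suffices to cite \cite{rb03} directly, as the authors do.
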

We will use a denseness argument to apply Lemma \ref{lem:concie} to 
$t\tilde{Z}$ with 
\[
\tilde Z := \sup\limits_{\g \in B_s \left(R\right)}  \left|\int\limits_{\manifold} \g\left(x\right) \left(\,\mathrm d G_t - \gdag\,\mathrm d x\right) \right|\,.
\]
The properties derived in the following lemma will be sufficient to bound
$\EW{\tilde{Z}}$:
\begin{lem}\label{lem:sum}
Let $\manifold \subset \mathbb R^d$ be a bounded domain with Lipschitz boundary, $R>0$ and suppose $s>\frac{d}{2}$. Then there exists a countable 
family of real-valued functions $\{\phi_\bfj : \bfj \in \mathcal{J}\}$, 
numbers $\gamma_\bfj$, $\bfj\in \mathcal{J}$ and constants $\Cgdag,\Cext>0$ 
depending only on $s$ and $\manifold$ such  that 
\begin{align}\label{eq:cond_gamma}
&\sum\limits_{\bfj\in \mathcal{J}} \gamma_\bfj^2 \int\limits_{\manifold} \phi_\bfj^2 \gdag \,\mathrm d x  \leq \Cgdag \|\gdag\|_{\L^1(\manifold)}\,,
\end{align}
and for all $\g \in B_s\left(R\right)$ there exists 
real numbers $\beta_\bfj, \bfj \in \mathcal{J}$ such that 
\begin{align}
\label{eq:sum_representation}
& \g =  \sum\limits_{\bfj\in \mathcal{J}} \beta_\bfj \phi_\bfj \quad 
\mbox{and}\quad
\sum\limits_{\bfj\in \mathcal{J}} \left(\frac{\beta_\bfj}{\gamma_\bfj}\right)^2 
\leq \Cext^2 R^2\,.
\end{align}
\end{lem}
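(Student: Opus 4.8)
The plan is to build the system $\{\phi_\bfj\}$ from a wavelet or multiresolution basis adapted to the bounded Lipschitz domain $\manifold$, and to choose the weights $\gamma_\bfj$ so that \eqref{eq:cond_gamma} holds because of the embedding $H^s(\manifold)\hookrightarrow L^\infty(\manifold)$ (which uses $s>d/2$) together with the fact that $\gdag\in\L^1(\manifold)$, while \eqref{eq:sum_representation} is essentially the norm-equivalence characterisation of the Sobolev space $H^s(\manifold)$ in terms of wavelet coefficients. Concretely, I would first extend functions on $\manifold$ to $\Rset^d$ by a bounded linear extension operator (this is where the Lipschitz boundary enters), fix a compactly supported orthonormal wavelet basis of $L^2(\Rset^d)$ of regularity $>s$, and index by $\bfj=(\lambda,\text{type})$ running over scales $|\lambda|\ge 0$ and spatial positions. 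Restricting these wavelets to $\manifold$ (or taking a Riesz basis of $H^s(\manifold)$ of wavelet type) gives the countable family $\{\phi_\bfj\}$.

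Next I would set $\gamma_\bfj := 2^{-s|\bfj|}$ where $|\bfj|$ denotes the scale of $\phi_\bfj$. Then \eqref{eq:sum_representation} is precisely the statement that $\|\g\|_{H^s(\manifold)}^2 \asymp \sum_\bfj 2^{2s|\bfj|}\beta_\bfj^2 = \sum_\bfj (\beta_\bfj/\gamma_\bfj)^2$ for the wavelet coefficients $\beta_\bfj = \langle \g,\widetilde\phi_\bfj\rangle$ of the dual system — a standard fact for wavelet characterisations of Sobolev spaces on Lipschitz domains — so $\|\g\|_{H^s(\manifold)}\le R$ gives the bound $\Cext^2 R^2$ with $\Cext$ absorbing the norm-equivalence constant. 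For \eqref{eq:cond_gamma} I estimate
\[
\sum_{\bfj} \gamma_\bfj^2 \int_\manifold \phi_\bfj^2\,\gdag\,\mathrm dx
\le \|\gdag\|_{\L^1(\manifold)} \sup_{x\in\manifold} \sum_\bfj 2^{-2s|\bfj|}\phi_\bfj(x)^2 ,
\]
and the inner sum is finite and bounded uniformly in $x$: at a fixed scale $|\bfj|=\ell$ the compactly supported wavelets overlapping a given point number $O(1)$ and each is of size $O(2^{\ell d/2})$, so the scale-$\ell$ contribution is $O(2^{-2s\ell}2^{\ell d})=O(2^{-(2s-d)\ell})$, which is summable precisely because $s>d/2$; this yields \eqref{eq:cond_gamma} with $\Cgdag$ depending only on $s$, $d$ and the wavelet, hence only on $s$ and $\manifold$.

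The main obstacle is the geometric/functional-analytic bookkeeping on the domain rather than any single hard estimate: one must produce a genuine wavelet-type Riesz basis of $H^s(\manifold)$ (not merely of $L^2$) for which the norm equivalence \eqref{eq:sum_representation} holds with constants depending only on $s$ and $\manifold$, and for which the localisation bound giving the finite pointwise sum in \eqref{eq:cond_gamma} is available; on a Lipschitz domain this is cleanest via a bounded extension to $\Rset^d$ followed by restriction, so I would phrase the construction that way and cite the standard Sobolev wavelet-characterisation and extension results. Once the basis and the choice $\gamma_\bfj=2^{-s|\bfj|}$ are in place, both \eqref{eq:cond_gamma} and \eqref{eq:sum_representation} follow from the two routine summations sketched above, and the constants $\Cgdag$, $\Cext$ manifestly depend only on $s$ and $\manifold$ as claimed.
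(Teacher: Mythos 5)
Your proof is correct, and it reaches the same two estimates by a route that is structurally parallel to, but concretely different from, the paper's. Both arguments share the skeleton ``extend $\g$ by a bounded extension operator (using the Lipschitz boundary), expand in a frequency-localized basis, take $\gamma_\bfj$ to be the $H^s$-weights, and use $s>d/2$ for summability.'' The paper, however, extends into $H^s_0\left(\left[-\kappa,\kappa\right]^d\right)$, passes to the periodic Sobolev space, and uses the trigonometric orthonormal basis with $\gamma_\bfj=(1+|\bfj|^2)^{-s/2}$; there \eqref{eq:sum_representation} is an exact identity ($\sum_\bfj(\beta_\bfj/\gamma_\bfj)^2=\Vert E_{\rm ext}\g\Vert_{H^s_{\rm per}}^2$), and \eqref{eq:cond_gamma} follows from the \emph{uniform} bound $\Vert\phi_\bfj^2\Vert_\infty\leq\kappa^{-d}$ together with $\sum_\bfj(1+|\bfj|^2)^{-s}<\infty$. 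You instead use a compactly supported wavelet Riesz basis with $\gamma_\bfj=2^{-s|\bfj|}$, where \eqref{eq:sum_representation} is a two-sided norm equivalence rather than an identity, and where the uniform sup-norm bound fails (wavelets at scale $\ell$ have size $2^{\ell d/2}$) but is compensated by finite overlap at each scale, giving the pointwise bound $\sum_\bfj 2^{-2s|\bfj|}\phi_\bfj(x)^2\lesssim\sum_\ell 2^{-(2s-d)\ell}<\infty$ — the same role that $s>d/2$ plays in the paper's lattice sum. Both arguments are complete modulo standard citations (extension operator plus either the equivalence of the $H^s_0$ and periodic norms, or the wavelet characterization of $H^s$); the Fourier route is slightly more elementary and self-contained, while your wavelet route localizes in space and would adapt more easily if one wanted, say, weighted versions of \eqref{eq:cond_gamma} with $\gdag$ replaced by a measure concentrated on part of $\manifold$. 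No gap.
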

\begin{proof}
Choose some $\kappa > 0$ such that $\overline{\manifold} \subset \left(-\kappa,\kappa\right)^d$. Then there exists a continuous extension operator 
$E: H^s \left(\manifold\right) \longrightarrow 
H^s_0\left(\left[-\kappa, \kappa\right]^d\right)$ (see e.g.\ 
\cite[Cor.~5.1]{w87}). Consider the following orthonormal bases
$\{\varphi_j:j\in\Zset\}$ of $\L^2([-\kappa,\kappa])$ and
$\{\phi_\bfj:\bfj\in \Zset^d\}$ of $\L^2([-\kappa,\kappa]^d)$:
\begin{align*}
\varphi_j(x) := \frac{1}{\sqrt{\kappa}}\begin{cases}
\sin\paren{\pi j x/\kappa},&j>0,\\
1/\sqrt{2},& j=0,\\
\cos\paren{\pi j x/\kappa},&j<0,
\end{cases}
\qquad 
\phi_{\bfj}(x_1,\cdots,x_d):= \prod_{l=1}^d \varphi_{j_l}(x_l)\,.
\end{align*}
We introduce the norm $\|\mathfrak{g}\|_{H^s_{\rm per}} = \big(\sum_{j\in\Zset^d}(1+|\bfj|^2)^s
|\langle \mathfrak{g},\phi_{\bfj}\rangle|^2\big)^{1/2}$ and 
the periodic Sobolev space $H^s_{\rm per} \big(\left[-\kappa,\kappa\right]^d\big):=\{\mathfrak{g}\in \L^2([-\kappa,\kappa]^d)~\big|~\|\mathfrak{g}\|_{H^s_{\rm per}}<\infty\}$.  
The embedding $J:H^s_0\left(\left[-\kappa, \kappa\right]^d\right)
\hookrightarrow H^s_{\rm per} \left(\left[-\kappa,\kappa\right]^d\right)$ 
is well defined and continuous as the norms of both spaces are equivalent
(see e.g.\ \cite[Exercise 1.13]{w87}), so the extension operator
\[
E_{\rm ext}:= J\circ E : H^s \left(\manifold\right) \longrightarrow H^s_{\rm per} \left(\left[-\kappa, \kappa\right]^d\right).
\]
is continuous. In particular, 
\[
E_{\rm ext} \left(B_s \left(R\right)\right) \subset\left\{\g \in H^s_{\rm per} \left(\left[-\kappa, \kappa\right]^d\right) ~\big|~ \left\Vert \g \right\Vert_{H^s_{\rm per} \left(\left[-\kappa, \kappa\right]^d\right)} 
\leq \Cext R\right\}\quad \mbox{with }
\Cext:= \left\Vert E_{\rm ext}\right\Vert
\]
and \eqref{eq:sum_representation} holds true with 
$\beta_\bfj := \left<\g,\phi_\bfj\right>$ and 
$\gamma_\bfj := \big(1 + \left|\bfj\right|^2 \big)^{-s/2}$. 
Moreover, 
as $\|\phi_{\bfj}^2\|_{\infty}\leq \kappa^{-d}$ for all $\bfj\in\Zset^d$
we obtain
\[
\sum\limits_{\bfj\in \mathbb Z^d} \gamma_\bfj^2 \int\limits_{\manifold} \phi_\bfj^2 \gdag \,\mathrm d x 
\leq \Cgdag \int\limits_{\manifold} \gdag \,\mathrm d x\quad
\mbox{with }
\Cgdag := \kappa^{-d}
\sum\limits_{\bfj\in \mathbb Z^d} \left(1 + \left|\bfj\right|^2 \right)^{-s}\,,
\]
and majorization of the sum by an integral shows that 
$\Cgdag<\infty$ as $s > d/2$. 
Therefore,  \eqref{eq:cond_gamma} holds true, and the proof is complete.
\end{proof}

\begin{lem}\label{lem:ew}
Under the assumptions of Lemma \ref{lem:sum} we have 
\[
\EW{\tilde Z} \leq \frac{\Cgdag\Cext R}{\sqrt{t}}\|\gdag\|_{\L^1(\manifold)}\,.
\]
\end{lem}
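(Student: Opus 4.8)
The plan is to combine the basis expansion furnished by Lemma~\ref{lem:sum} with the Cauchy--Schwarz inequality so as to replace the supremum over the (uncountable) ball $B_s(R)$ by a single, explicitly computable random variable, and then to evaluate its expectation using the elementary Poisson moment identities \eqref{eq:integralPoisson}.

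First I would fix a realization of $G_t=\tfrac1t\tilde G_t$ and let $\g\in B_s(R)$. By Lemma~\ref{lem:sum} we may write $\g=\sum_{\bfj\in\mathcal J}\beta_\bfj\phi_\bfj$, and by the construction in that proof (extension to a periodic Sobolev space) together with $s>\tfrac d2$ this series converges uniformly on $\manifold$; hence it may be integrated term by term against the finite signed measure $\mathrm dG_t-\gdag\,\mathrm dx$ (recall $\gdag\in\L^1(\manifold)$ and $\tilde G_t$ has almost surely finitely many points). Writing $\beta_\bfj=(\beta_\bfj/\gamma_\bfj)\gamma_\bfj$ and applying the Cauchy--Schwarz inequality for $\ell^2$-sequences together with the bound $\sum_\bfj(\beta_\bfj/\gamma_\bfj)^2\le\Cext^2R^2$ from \eqref{eq:sum_representation}, I obtain
\[
\left|\int_\manifold\g\,(\mathrm dG_t-\gdag\,\mathrm dx)\right|
\le\Cext R\left(\sum_{\bfj\in\mathcal J}\gamma_\bfj^2\left(\int_\manifold\phi_\bfj\,(\mathrm dG_t-\gdag\,\mathrm dx)\right)^2\right)^{1/2}.
\]
The right-hand side is independent of $\g$, so taking the supremum over $\g\in B_s(R)$ gives the same bound for $\tilde Z$ (in particular $\tilde Z$ is dominated by a measurable random variable, which dispenses with any measurability worry about the supremum).

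Next I would take expectations. Jensen's inequality for the concave function $\sqrt{\cdot}$ and the monotone convergence theorem (all summands are non-negative) yield
\[
\EW{\tilde Z}\le\Cext R\left(\sum_{\bfj\in\mathcal J}\gamma_\bfj^2\,\EW{\left(\int_\manifold\phi_\bfj\,(\mathrm dG_t-\gdag\,\mathrm dx)\right)^2}\right)^{1/2}.
\]
For each $\bfj$, the rescaling $G_t=\tfrac1t\tilde G_t$ together with \eqref{eq:integralPoisson} applied to the Poisson process $\tilde G_t$ of intensity $t\gdag$ gives $\EW{\int_\manifold\phi_\bfj\,\mathrm dG_t}=\int_\manifold\phi_\bfj\gdag\,\mathrm dx$, so the integrand has mean zero, and therefore $\EW{(\int_\manifold\phi_\bfj\,(\mathrm dG_t-\gdag\,\mathrm dx))^2}=\Var{\int_\manifold\phi_\bfj\,\mathrm dG_t}=\tfrac1{t^2}\Var{\int_\manifold\phi_\bfj\,\mathrm d\tilde G_t}=\tfrac1t\int_\manifold\phi_\bfj^2\gdag\,\mathrm dx$. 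Substituting this and invoking \eqref{eq:cond_gamma} gives
\[
\EW{\tilde Z}\le\frac{\Cext R}{\sqrt t}\left(\sum_{\bfj\in\mathcal J}\gamma_\bfj^2\int_\manifold\phi_\bfj^2\gdag\,\mathrm dx\right)^{1/2}\le\frac{\Cext R}{\sqrt t}\sqrt{\Cgdag\,\|\gdag\|_{\L^1(\manifold)}},
\]
which is the asserted bound; the precise value of the constant is immaterial for the sequel, since in Theorem~\ref{thm:pie} it is anyway allowed to depend on $s$, $\manifold$ and $\|\gdag\|_{\L^1(\manifold)}$.

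I do not expect a genuine obstacle here. The only points requiring a little care are the justification of the term-by-term integration of the Fourier-type series against $\mathrm dG_t-\gdag\,\mathrm dx$ (handled by the uniform convergence coming from $H^s\hookrightarrow C(\overline{\manifold})$ for $s>\tfrac d2$), the interchange of expectation and the infinite sum (Tonelli, all terms non-negative), and the bookkeeping of the factor $1/t$ when passing from $\tilde G_t$ to $G_t$ in the variance computation.
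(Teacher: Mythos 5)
Your proof is correct and follows essentially the same route as the paper's: insert the expansion from Lemma~\ref{lem:sum}, apply the Cauchy--Schwarz (H\"older) inequality in $\ell^2$ to eliminate the supremum, use Jensen/Tonelli to pass the expectation inside, and evaluate each term via the Poisson variance identity \eqref{eq:integralPoisson}, arriving at the same bound $\frac{\sqrt{\Cgdag}\,\Cext R}{\sqrt{t}}\sqrt{\|\gdag\|_{\L^1(\manifold)}}$ that the paper's own proof ends with (the constant in the lemma's displayed statement is stated slightly differently, but as you note this is immaterial for Theorem~\ref{thm:pie}). Your additional remarks on term-by-term integration and measurability of the supremum are sound and merely make explicit what the paper leaves implicit.
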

\begin{proof}
With the help of Lemma \ref{lem:sum} we can now insert \eqref{eq:sum_representation} and apply H\"older's inequality for sums to find
\begin{align*}
\tilde Z &\leq \sup\limits_{\sum\limits_{\bfj\in J} \left(\frac{\beta_\bfj}{\gamma_\bfj}\right)^2 \leq (\Cext R)^2} \left| \sum\limits_{\bfj\in J} \frac{\beta_\bfj}{\gamma_\bfj} \int\limits_{\manifold} \gamma_\bfj \phi_\bfj \left(\mathrm d G_t - \gdag\,\mathrm d x\right)\right| \\[0.1cm]
&\leq \Cext R \sqrt{\sum\limits_{\bfj\in J} \gamma_\bfj^2 \bigg(\int\limits_{\manifold} \phi_\bfj \left(\mathrm d G_t - \gdag\,\mathrm d x\right)\bigg)^2}
\end{align*}
where we used that the functions $\phi_\bfj$ are real-valued. Hence by Jensen's inequality
\begin{align}
\EW{\tilde Z} & \leq \sqrt{\EW {\tilde{Z}^2}}\leq \Cext R \sqrt{\sum\limits_{\bfj\in \mathcal{J}} \gamma_\bfj^2 \EW {\bigg(\int\limits_{\manifold} \phi_\bfj \left(\mathrm d G_t - \gdag\,\mathrm d x\right)\bigg)^2}} \label{eq:EW_est_1}.
\end{align}
Using \eqref{eq:integralPoisson} we obtain
\[
\EW{\bigg(\int\limits_{\manifold} \phi_\bfj \left(\mathrm d G_t - \gdag\,\mathrm d x\right)\bigg)^2} 
= \frac{1}{t^2} \EW {\bigg(\int\limits_{\manifold} \phi_\bfj 
\left(t \,\mathrm d G_t - t\gdag\,\mathrm d x\right)\bigg)^2} 
= \frac1t \int\limits_{\manifold} \phi_\bfj^2 \gdag \,\mathrm d x\,
\]
and plugging this into \eqref{eq:EW_est_1} and using \eqref{eq:cond_gamma}
we obtain
\[
\EW{\tilde Z} \leq \frac{\Cext R}{\sqrt{t}} \sqrt{\sum\limits_{\bfj\in \mathcal{J}} 
\gamma_\bfj^2 \int\limits_{\manifold} \phi_\bfj^2 \gdag \,\mathrm d x}
\leq \frac{\sqrt{\Cgdag}\Cext R}{\sqrt{t}}\sqrt{\|\gdag\|_{\L^1(\manifold)}}\,.
\]
\end{proof}

\begin{proof}[Proof of Theorem \ref{thm:pie}]
By Sobolev's embedding theorem the embedding operator $E_\infty :H^s \left(\manifold\right) \hookrightarrow \L^\infty \left(\manifold\right)$
is well defined and continuous, so 
\begin{equation}\label{eq:bound_frakg}
\left\Vert \g \right\Vert_{\L^\infty\left(\manifold\right)} 
\leq R\left\Vert E_{\infty} \right\Vert   \qquad\text{for all}\qquad \g \in B_s\left(R\right)\,.
\end{equation}
Now we choose a countable subset $\left\{\g_a\right\}_{a \in A} \subset B_s\left(R\right)$ which is dense in $B_s\left(R\right)$ w.r.t.\ the $H^s$-norm,
and hence also the $\L^\infty$-norm and set $N = t G_t$ and $\,\mathrm d \nu = t \gdag \,\mathrm d x$ in  Lemma \ref{lem:concie} to obtain 
\begin{equation}\label{eq:concie}
\Prob {\tilde{Z} \geq \left(1+ \varepsilon\right) \EW{\tilde{Z}} + 
\frac{\sqrt{12 v_0 \bar \rho}}{t} + 
\frac{\kappa \left(\varepsilon\right) \left\Vert E_{\infty} \right\Vert  R \bar \rho}{t}} \leq \exp\left(-\bar \rho\right)
\end{equation}
for all $\bar \rho > 0$.  Choosing $\varepsilon=1$ and using Lemma \ref{lem:ew} 
and the simple estimate 
\[
\tilde{v}_0  \leq t R^2 \left\Vert E_{\infty} \right\Vert^2 \left\Vert \gdag\right\Vert_{\L^1 \left(\manifold\right)}\,,
\] 
yields 
\begin{equation}\label{eq:concie_proof_end}
\Prob{\tilde Z\leq \frac{C_1 R}{\sqrt{t}} 
+ \frac{ C_2 R \sqrt{\bar \rho}}{\sqrt{t}} 
+ \frac{C_3 R \bar \rho }{t}}\geq 1 -\exp\left(-\bar \rho\right)
\end{equation}
for all $\bar \rho, t > 0$ with $C_1:= 2\sqrt{\Cgdag} \Cext \sqrt{\|\gdag\|_{\L^1(\manifold)}}$, 
 $C_2 := \sqrt{12}\left\Vert E_{\infty} \right\Vert \sqrt{\left\Vert \gdag\right\Vert_{\L^1 \left(\manifold\right)}}$ and $C_3 :=\left(32 + \frac54\right)\left\Vert E_{\infty} \right\Vert$. If $t, \bar \rho\geq 1$, we have $\frac{1}{t} \leq \frac{1}{\sqrt{t}}$ 
and $\sqrt{\rho}\leq \rho$, so
\[
\Prob{\tilde Z \leq \left(C_1 + C_2 + C_3\right) \frac{\bar \rho R}{\sqrt{t}} }\geq 1 -\exp\left(-\bar\rho\right)\qquad\mbox{for }\bar \rho, t \geq 1\,.
\]
Setting $\Cconc := \max\left\{C_1 + C_2 + C_3,1\right\}$ and $\rho := \bar \rho R\Cconc$ this shows the assertion. 
\end{proof}

\section*{Acknowledgments}
We would like to thank Patricia Reynaud-Bouret for fruitful discussions on the concentration inequality. Financial support by the German Research Foundation DFG through the Research Training Group 1023 and CRC 755 is gratefully acknowledged. 

\small
\bibliography{werner_hohage_12}{}
\bibliographystyle{plain}
\end{document}